\documentclass[oneside,10pt]{amsart}
\usepackage{preamble}

\title{Universal minimal flows of homeomorphism groups of continua}
\author{Sumun Iyer}
\date{\today}
\email{sumuni@andrew.cmu.edu}

\begin{document}

\begin{abstract}
    We define a combinatorial property of a projective Fraisse category which we call the \emph{approximate Ramsey property}. Let $F$ be a continuum, $G$ a closed subgroup of the homeomorphism group of $F$, and  $\mathbb{F}$ the limit of projective Fraisse category $\mathcal{F}$ such that $\Aut(\limF)$ is dense in $G$. We prove that $\mathcal{F}$ has the approximate Ramsey property if and only if $G$ is extremely amenable. We prove that the group of homeomorphisms of the universal pseudo-solenoid has non-metrizable universal minimal flow. 
\end{abstract}
\maketitle

\section{Introduction}

All spaces are assumed Hausdorff. Let $G$ be a topological group. A \textbf{$G$-flow} is a compact space $X$ equipped with a continuous $G$-action. The $G$-flow $X$ is \textbf{minimal} when every orbit in it is dense. By a theorem of Ellis, for each topological group $G$ there is a unique up to isomorphism minimal $G$-flow $\mathcal{M}(G)$ with the property that for any minimal $G$-flow $X$ there is a continuous equivariant surjection $\mathcal{M}(G) \to X$ (see \cite[Theorem 6.1.3]{pestov_book}). This flow $\mathcal{M}(G)$ is called the \textbf{universal minimal flow of $G$}. $\mathcal{M}(G)$ is in many cases very large, with wild dynamics; for any locally compact, non-compact group $G$, for example, $\mathcal{M}(G)$ is a non-metrizable space by a theorem of Veech (see \cite{kpt} Appendix). However for many natural non-locally compact groups the universal minimal flow can be a  well-understood action of the group on a metrizable space or even the trivial action on the singleton space. In this latter case, when $\norm{\mathcal{M}(G)}=1$, the group $G$ is called \textbf{extremely amenable}. The reader can find in \cite{pestov_book} much more on the topic, many examples, and the connections between extreme amenability, combinatorics, and probability.

A \textbf{continuum} is a compact, connected, metrizable space. Given a continuum $F$, let $\Homeo(F)$ be the group of all homeomorphisms of $F$ with the uniform convergence topology. We are interested here in the universal minimal flows of the groups $\Homeo(F)$, for $F$ a continuum. A topological group is \textbf{non-archimedean} if it has a neighborhood basis at the identity consisting of open subgroups. In \cite{kpt}, Kechris-Pestov-Todorcevic characterized extreme amenability of non-archimedean Polish groups by a combinatorial condition called the Ramsey property. We remark that $\Homeo(F)$ is typically \emph{not} a non-archimedean group. 

\textbf{Projective Fraisse categories} are a tool developed by Irwin-Solecki \cite{irwin_solecki} to study continua and their homeomorphism groups through categories of finite graphs. We give the reader a rough idea of how projective Fraisse categories work here, but the precise definitions are in Section \ref{sec:proj_fr_lim}. The general set-up is that we have a category $\mathcal{F}$ whose objects are finite graphs (possibly enriched with extra combinatorial information) and morphisms which are graph morphisms. From category $\mathcal{F}$ is produced a limit object $\mathbb{F}$ called the \textbf{projective Fraisse limit of $\mathcal{F}$}. This limit $\mathbb{F}$ is topologically a Cantor set and it comes with an compact binary relation $R^\mathbb{F}$ which it inherits from the edges of the graphs in $\mathcal{F}$. In the situations of interest, the relation $R^\mathbb{F}$ is actually an equivalence relation and $\mathbb{F}$ quotiented by $R^\mathbb{F}$ is some continuum $F$ that we wish to understand:
\[q: \mathbb{F} \to \mathbb{F}/ R^{\mathbb{F}}=F\]
The quotient map $q$ induces a natural map:
\[\Phi:\Aut(\mathbb{F}) \to \Homeo(F)\]
which goes from the group of automorphisms of structure $\mathbb{F}$ to the group we care about- $\Homeo(F)$. In situations of interest, the image of $\Phi$ is \emph{dense} in $\Homeo(F)$ or in some natural subgroup of $\Homeo(F)$. As presented above, the theory might seem ad-hoc but in fact in \cite{panagio_cpt} it is shown that for any compact metric space $F$ and any closed subgroup $G$ of $\Homeo(F)$, there is some projective Fraisse category whose limit quotients onto $F$ and such that $\Phi[\Aut(\limF)]$ is dense in $G$. 

The main theorem isolates a combinatorial condition of the category $\mathcal{F}$ which is equivalent to extreme amenability of closed subgroup $\overline{\Phi[\Aut(\mathbb{F})]} \leq \Homeo(F)$. So for instance if the image of $\Phi$ is dense in $\Homeo(F)$, the theorem below characterizes extreme amenability of the group $\Homeo(F)$.  

\begin{thm}\label{thm_intro}
    Assuming the set-up above, the category $\mathcal{F}$ has the approximate Ramsey property if and only if $\overline{\Phi[\Aut(\limF)]}$ is extremely amenable.
\end{thm}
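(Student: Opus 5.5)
Write $G=\overline{\Phi[\Aut(\limF)]}\le\Homeo(F)$ with the uniform metric $d_\infty$, built from a compatible metric $d$ on $F$. The plan is to go through the standard characterization of extreme amenability, which is a Kechris--Pestov--Todorcevic / Pestov style criterion valid for arbitrary topological groups. A topological group $G$ is extremely amenable if and only if the following holds: for every finite set $A\subseteq G$, every $\epsilon>0$, every identity neighborhood $V$, and every finite coloring of $G$, there is $g\in G$ and a color class $C$ with $gA\subseteq (VC)$, an $\epsilon$-fattening of one color. Equivalently, every finite coloring of the left uniform structure is "approximately Ramsey." Since $\Phi[\Aut(\limF)]$ is dense in $G$, it suffices to test this on elements and colorings coming from $\Aut(\limF)$. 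The whole proof then reduces to translating the approximate oscillation-stability condition into the finite-graph language. I expect the approximate Ramsey property to be defined exactly as such a translation: for $A\le B$ in $\mathcal F$, $k$ colors and an ``approximation level'' given by a further object $D\to A$ whose fibers are $d$-small after passing to $F$, there is $C$ such that every $k$-coloring of the epimorphisms $C\to A$ admits $b\colon C\to B$ with all of $\{f\circ b\}$ ``close'' to one color, where closeness means agreeing after composing to a coarser object, or being joined by edges.

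Step 1 is the dictionary. An epimorphism $\phi\colon \limF\to A$ determines the cover of $F$ by the sets $q(\phi^{-1}(a))$. Two automorphisms $g,h$ give $\Phi(g),\Phi(h)$ uniformly $\epsilon$-close whenever $\phi\circ g$ and $\phi\circ h$ agree up to $R^A$-adjacency and the mesh of the cover is below $\epsilon$. Conversely, uniform closeness of $\Phi(g),\Phi(h)$ forces $\phi\circ g$ and $\phi\circ h$ to be adjacent-close at a coarser level. This rests on the projective Fraisse property that covers obtained from finer objects refine and have mesh tending to zero, which I take from the earlier sections. With the dictionary, basic open sets of $G$ correspond to finite data on epimorphisms $\limF\to A$ modulo the adjacency relation.

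Step 2, from approximate Ramsey to extreme amenability. Given a finite coloring of $G$ and data $(A,\epsilon)$, pull the coloring back to $\Aut(\limF)$. Coarsen it to a coloring of epimorphisms $C\to A$ for a suitable $C$, using the fact that $\Aut(\limF)$ acts transitively on the epimorphisms $\limF\to C$ (projective extension property). Apply the approximate Ramsey property to obtain a monochromatic-up-to-closeness copy, and lift it back to an element $g$ via the extension property. By Step 1 this gives the approximate Ramsey condition in $G$.

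Step 3, the converse. Failure of the approximate Ramsey property yields, for some $A,B,k$ and every $C$, a bad coloring. A compactness argument (König's lemma / ultrafilter along a cofinal sequence $C_n\to \limF$) assembles these into a coloring of epimorphisms $\limF\to A$, hence of $\Aut(\limF)$, that is uniformly continuous for $d_\infty$ via $\Phi$. It therefore extends to $G$, and witnesses failure of the criterion. The main obstacle is Step 1, particularly the converse direction: $\Phi$ is not injective, and uniform closeness in $F$ must be converted into an exact combinatorial closeness of epimorphisms. I would first try to handle this by passing to an object $D$ refining $A$ whose cover mesh is below the Lebesgue number of the $A$-cover. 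I would then use connectedness of $F$, through the transitivity of $R^{\limF}$, to control the distance in steps.
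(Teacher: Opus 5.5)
Your route is genuinely different from the paper's and can be made to work, but Step 3 relies on a false claim.

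\textbf{The false step.} You assert that the coloring $x\mapsto\Delta(\pi_m\circ x)$ of $\Aut(\limF)$, assembled by K\"onig's lemma, is uniformly continuous for $d_{\sup}$ via $\Phi$, and that it therefore extends to $G$. Nothing forces this. The bound $d_{\sup}(\Phi(x),\Phi(y))<\varepsilon$ only gives that $\pi_m\circ x$ and $\pi_m\circ y$ are at $d_{\Epi(\limF,A_m)}$-distance $\le 1$ (Lemma \ref{lem_metric_spread}). A bad coloring may well separate such pairs, and in general it must. A uniformly continuous finite coloring of the dense subgroup would extend to a clopen partition of $G$. So if $G$ is connected but not extremely amenable, every such coloring is constant, hence not bad. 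An example is $G=\Homeo_{+}(S^1)$, which is realized in this set-up by \cite{panagio_cpt}.

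\textbf{The repair.} Simply drop continuity; Pestov's criterion quantifies over arbitrary finite colorings. Work in $(\Aut(\limF),\tau)$ with the coloring $c(y)=\Delta(\pi_m\circ y)$. Take a finite $K$ such that every $f\in\Epi(A_n,A_m)$ equals $\pi_m\circ k$ for some $k\in K$, up to $\pi_n$ (Lemma \ref{lem_ext_to_aut}). Take $V=\Phi^{-1}(B_{d_{\sup}}(1,\varepsilon))$ with $\varepsilon$ from Lemma \ref{lem_metric_spread} for the level $m$ of $A$. If $Kg\subseteq Vc^{-1}(i)$, then $\Epi(A_n,A_m)\circ(\pi_n\circ g)\subseteq[\Delta^{-1}(i)]_1$, contradicting badness. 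Then pass to $G$ by Proposition \ref{prop_dense_subgp}.

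\textbf{Badness of the limit coloring.} This is not automatic from K\"onig's lemma, because the witness in $\Epi(B,A)$ may change with the level. You need the pigeonhole-plus-coherence argument of Claim \ref{claim_lem_1_2}.

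\textbf{Smaller points.}
\begin{itemize}
\item With the paper's conventions ($d_{\sup}$ right-invariant, $VS$ its fattening as in Remark \ref{rmk_vs}), the criterion must use right translates, $Fg\subseteq Vc^{-1}(i)$, not $gF$. Only then does $\pi_m\circ(xg)=x'\circ(\pi_n\circ g)$ give a pattern of the form $\Epi(A_n,A_m)\circ f$.
\item The approximate Ramsey property has no auxiliary approximation object: the radius is fixed at $1$ in $\Epi(C,A)$. This suffices for two reasons. In the direction toward extreme amenability you choose $A=A_m$ deep enough, by Lemma \ref{lem_graphdistsmall} with $M=1$. In the converse, Lemma \ref{lem_metric_spread} gives adjacency at the level of $A$ itself, not at a coarser level.
\item Connectedness of $F$ plays no role.
\item $\Phi$ is injective; it is only not a topological embedding.
\end{itemize}

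\textbf{Comparison with the paper.} The paper uses Bartosova's syndetic-set criterion (Theorem \ref{thm_bart_1}). The approximate Ramsey property gives $\rdeg(A_m)=1$. Lemma \ref{lem_ardeg_synd_overlap} then turns two syndetic sets into color classes whose $1$-fattenings meet, producing elements at graph distance $\le 2$. Failure of the property yields two $\varepsilon$-separated syndetic sets via Lemma \ref{lem_ardeg_to_synd}, using the same K\"onig argument you propose. Your route is the classical KPT translation and needs only distance $1$ in the forward direction, since only one side is fattened. The paper's route buys something extra: the same two lemmas, run with $k$ sets, give the metrizable universal minimal flow characterization through Theorem \ref{thm_dis} at no extra cost.
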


For the definition of the \textbf{approximate Ramsey property}, see Definition \ref{defn_arp}. Here we remark that the approximate Ramsey property is a completely combinatorial property about category $\mathcal{F}$ and that it is a weakening of the usually considered \textbf{Ramsey property} for projective Fraisse categories as defined in, for example \cite{bk_gower}. From the combinatorial perspective, the approximate Ramsey property might be of interest as a new and naturally motivated Ramsey-type notion. Theorem \ref{thm_intro} is proved in Section \ref{sec_main_thms}, as Corollary \ref{cor_main_ea}. The format of Theorem \ref{thm_intro}- characterizing a property of $\Homeo(F)$ via combinatorial conditions on $\mathcal{F}$- is similar to a recent work of Poór-Solecki \cite{poor_solecki} which gives, among other results, a combinatorial condition equivalent to the existence of a comeager conjugacy class in $\Homeo(F)$. Both Theorem \ref{thm_intro} and the results of \cite{poor_solecki} exploit the combinatorial nature of the group $\Aut(\limF)$ along with control of the topology $\Aut(\limF)$ inherits \emph{from the group $\Homeo(F)$ via map $\Phi$}.  

In Section \ref{sec_main_thms} we also prove a generalization of Theorem \ref{thm_intro} which characterizes metrizability of the universal minimal flow-- Corollary \ref{cor_main_metr_umf}. The generalization is natural from the perspective of Ramsey theory and the results of \cite{zucker_umf}. 

Finally as an application of the tools presented here we prove the following:

\begin{thm}\label{thm_intro_2}
    Let $S$ be the universal pseudo-solenoid. The group $\Homeo(S)$ of homeomorphisms of $S$ with the uniform convergence topology has non-metrizable universal minimal flow.
\end{thm}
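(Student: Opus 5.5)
We will deduce Theorem \ref{thm_intro_2} from the metrizability criterion of Section \ref{sec_main_thms}, Corollary \ref{cor_main_metr_umf}. Following \cite{zucker_umf}, that criterion should say roughly this: $\mathcal{M}(G)$ is metrizable exactly when there is a precompact expansion of $\mathcal{F}$ with the approximate Ramsey property. Equivalently, every object of $\mathcal{F}$ has finite ``approximate Ramsey degree''. So the task is to exhibit one object $A$ of the pseudo-solenoid category with infinite approximate Ramsey degree.

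\textbf{Step 1: the category.} Fix the projective Fraïssé category $\mathcal{F}$ whose limit $\limF$ quotients onto the universal pseudo-solenoid $S$ and for which $\Phi[\Aut(\limF)]$ is dense in $\Homeo(S)$. Its objects are finite cyclic graphs (or circularly ordered graphs). Its morphisms are epimorphisms that are ``crooked'' and wind around the circle with arbitrary degree; this mixing of degrees is what makes $S$ universal.

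\textbf{Step 2: unboundedly many colors.} Let $A$ be a small cycle. For each $n$ we will define a coloring of the epimorphisms $B\to A$, for all $B$, using $n$ colors. The color records an invariant that cannot be made uniform: the winding number of the induced map of cycles, reduced mod $n$, or a local orientation and crookedness pattern. We then show that for every $C$ there is a coloring $c$ of $\mathrm{Epi}(C,A)$ such that no $f\colon C\to B$ makes $c$ approximately constant on $\mathrm{Epi}(B,A)\circ f$. Here ``approximately constant'' is in the sense of Definition \ref{defn_arp}, where the fuzziness is controlled by the metric pulled back from $\Homeo(S)$ through $\Phi$. The key point is that composing with $f$ multiplies degrees, so $\mathrm{Epi}(B,A)\circ f$ still realizes every residue class. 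This holds provided $\mathrm{Epi}(B,A)$ contains maps of degree $1$ and of degree $d+1$ and so on, which universality supplies.

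\textbf{Step 3: robustness under approximation, the main obstacle.} The approximate Ramsey property only asks for homogeneity up to $\epsilon$ in the $\Homeo(S)$-uniformity. Maps that differ by a small perturbation of $S$ must therefore get the same color. Winding degree is a homotopy invariant of maps to the circle factor, and small homeomorphisms of $S$ are close to the identity, so they cannot change degree-type data. We must still check this carefully at the level of finite graphs. The plan is to show that if two epimorphisms $B\to A$ are $\epsilon$-close (measured through a refinement that is fine relative to $A$), then their composites with a covering $A\to \mathbb{S}^1$ are homotopic. Once that holds, the color is a genuine invariant, the approximate Ramsey degree of $A$ is infinite, and Corollary \ref{cor_main_metr_umf} gives non-metrizability of $\mathcal{M}(\Homeo(S))$.

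An alternative route for Step 3 is also available. One can exhibit an infinite family of pairwise non-isomorphic minimal $\Homeo(S)$-flows that are not factors of any single metrizable flow, for instance induced from the $\mathbb{Z}/n$-orientation data. However, the Ramsey-degree route fits the paper's machinery better, so we will follow it.
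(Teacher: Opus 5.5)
\textbf{Gap: the degree-mod-$n$ coloring does not work.} Your reduction is the right one. Via Corollary \ref{cor_main_metr_umf} and Lemma \ref{lem_fraisse_dense_pseudo_sol}, you find an object of Irwin's category with infinite approximate Ramsey degree. You color morphisms by a function of winding degree, and use multiplicativity of degree together with its stability under distance-$1$ perturbations. But the one concrete coloring you propose fails, and choosing the coloring is the crux.

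To get $\rdeg(A)\geq k$ you fix $B$. Then for \emph{every} $C$ you need a single coloring $\chi$ of $\Epi(C,A)$ with the following property: for \emph{every} $g\in\Epi(C,B)$, each of the $k$ colors is realized by some $\varphi\circ g$ with $\varphi\in\Epi(B,A)$, whose entire $1$-ball has that color. Showing only that no $g$ makes $\chi$ ``approximately constant'' gives degree $\geq 2$, not $\geq k$.

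Now take $\chi=\deg\bmod n$. Choose $C$ large enough to carry some $g\in\Epi(C,B)$ with $n\mid\deg(g)$. Then every element of $\Epi(B,A)\circ g$ has degree $\deg(\varphi)\deg(g)\equiv 0\pmod n$, so $\Epi(B,A)\circ g$ is monochromatic. Your claim that composing with $g$ ``still realizes every residue class'' is false whenever $\gcd(\deg g,n)>1$. The products then lie among the multiples of $\gcd(\deg g,n)$ modulo $n$, however many degrees $\Epi(B,A)$ contains.

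\textbf{The missing idea.} You need a coloring on which multiplying by an arbitrary $\deg(g)$ acts as a translation of $\mathbb{Z}/k$, rather than as a possibly non-invertible multiplication on $\mathbb{Z}/n$. The paper uses $\chi(f)=\rho(\deg f)\bmod k$, where $\rho$ is the $2$-adic valuation, so that $\rho(\deg\varphi\cdot\deg g)=\rho(\deg\varphi)+\rho(\deg g)$. It takes $B$ to be the cycle with $5\cdot 2^k$ vertices, which has morphisms onto $A$ of degree $2^j$ for every $j<k$. Hence every residue mod $k$ is hit for every $g$.

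\textbf{Secondary issue: Step 3 and the size of $A$.} Your Step 3 should be the exact combinatorial statement $d_{\Epi(E,A)}(f,h)\leq 1\Rightarrow\deg f=\deg h$. The fuzziness in Definition \ref{defn_arp} is the graph metric at radius $1$ on $\Epi(E,A)$, not the metric pulled back from $\Homeo(S)$. This is exactly where the size of $A$ matters, so ``a small cycle'' will not do.
\begin{itemize}
\item For a $3$-cycle, any two maps are within distance $1$ of each other.
\item For a $4$-cycle, consider maps from the $8$-cycle with $(f(0),\dots,f(7))=(0,1,2,3,0,1,2,3)$ and $(h(0),\dots,h(7))=(0,0,3,3,0,1,2,3)$. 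These are at distance $1$ but have degrees $2$ and $1$.
\end{itemize}
The paper takes $A$ to be the $5$-cycle and proves the statement (Lemma \ref{lem_deg_cont}) by comparing integer lifts. That argument is the discrete form of your homotopy idea.
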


Theorem \ref{thm_intro_2} is proved in Section \ref{sec:applications} where it occurs as Theorem \ref{thm_pseudo_sol}. A \textbf{pseudo-solenoid}, also sometimes called \textbf{pseudo-circle} in the literature, is a certain type of continuum, see Definition \ref{defn_solenoid} for the precise definition. These spaces are ``circle-shaped'' in that every open cover of a pseudo-solenoid can be refined by a cover of the form $U_0,U_1,\ldots, U_{n-1}$ where $U_i \cap U_j \neq \emptyset$ if and only if $\norm{i-j} \leq 1$ or $\{i,j\}=\{0,n-1\}$. A continuum $F$ is \textbf{indecomposable} if for any $A,B$ subcontinua of $F$ with $F=A \cup B$, we have that $A=F$ or $B=F$. Pseudo-solenoids are \textbf{hereditarily indecomposable} which means that every subcontinuum of a pseudo-solenoid is indecomposable. This is a very strong and very strange property for a continuum to have. Note that the interval $[0,1]$ or any manifold is decomposable. Being hereditarily indecomposable indicates that pseudo-solenoids are topologically very complex. The \textbf{universal} pseudo-solenoid is a pseudo-solenoid which continuously surjects onto any pseudo-solenoid. A projective Fraisse category approximating the universal pseudo-solenoid was constructed by Irwin in \cite{irwin_thesis} and we use Irwin's category to prove Theorem \ref{thm_intro_2}.

\subsection{Notation} We use $A \fsubset X$ to mean $A$ is a finite subset of $X$. Throughout, $\N=\{0,1,2,3,\ldots\}$. 

\section{Projective Fraisse limits}\label{sec:proj_fr_lim}
In this section, we introduce projective Fraisse limits and the general setting in which the results of Section \ref{sec_main_thms} apply.

\subsection{Background on projective Fraisse limits and their topological realizations} 
We follow for the most part in this subsection the treatment of projective Fraisse categories from \cite{pan_sol}.

Let $A$ be a set and $R^A$ a binary relation on $A$. We will say that $(A,R^A)$ is a \textbf{graph} if $R^A$ is reflexsive and symmetric. As usual, \textbf{reflexsive} means for all $a \in A$, $aR^Aa$ and symmetric means that for all $a,b \in A$, $aR^Ab \implies bR^A a$.  Note that this convention is different from the usual combinatorial graph theory definition, as we require our graphs to have self-loops at each vertex.

Let $\mathcal{L}$ be a language containing a special binary relation symbol $R$. If $A$ and $B$ are $\mathcal{L}$-structures, $f:B \to A$ is a \textbf{morphism} means that $f$ preserves the symbols in $\mathcal{L}$. That is, if $S \in \mathcal{L}$ is a relation symbol of arity $m$, then for all $b_1,\ldots, b_m \in B$, $S^B(b_1,\ldots, b_m) \implies S^A(f(b_1),\ldots, f(b_m))$. If $\sigma \in \mathcal{L}$ is a function symbol of arity $m$, then for all $b_1,\ldots,b_m \in B$ we have $f(\sigma^B(b_1,\ldots,b_m))=\sigma^A (f(b_1),\ldots,f(b_m))$. We say $f$ is a \textbf{surjective morphism} if $f:B \to A$ is surjective and for any relation symbol $S\in \mathcal{L}$ of arity $m$, if $S^A(a_1,\ldots, a_m)$ then there exists $b_1,\ldots, b_m$ with $f(b_i)=a_i$ and $S^B(b_1,\ldots,b_m)$.

A \textbf{projective Fraisse category} is a category of $\mathcal{L}$-structures and surjective morphisms such that:
\begin{enumerate}[(i)]
    \item for each $A \in \mathcal{F}$, $(A,R^A)$ is a finite graph
    \item there are countably many structures in $\mathcal{F}$ up to isomorphism
    \item for any $A,B \in \mathcal{F}$ there exists $C \in \mathcal{F}$ and morphisms $C \to A$ and $C \to B$ in $\mathcal{F}$
    \item for any $A,B,C \in \mathcal{F}$ and morphisms $f:B \to A$ and $g:C \to A$ in $\mathcal{F}$ there is a structure $D$ and morphisms $f': D \to B$ and $g': D \to C$ in $\mathcal{F}$ such that $f \circ f'=g \circ g'$. 
\end{enumerate}

The last property above is called the \textbf{projective amalgamation property} and it is the most important one in the definition. We use $\Epi(B,A)$ to represent the set of all morphisms $B \to A$ in $\mathcal{F}$.

First some notation for inverse limits. All inverse limits in the paper are over sequences of spaces indexed by the natural numbers. For any inverse limit of the form $L=\varprojlim (A_n,f_n^{n+1})$ where $f_n^{n+1}:A_{n+1} \to A_n$ we use $\pi_n^L:L \to A_n$ to denote projection onto the $n$th coordinate. Given $L=\varprojlim (A_n,f_n^{n+1})$ we use the notation that for any $n>m$
\[f_m^n: A_n \to A_m \textrm{ is the map } f_m^n = f_m^{m+1} \circ f_{m+1}^{m+2} \circ \cdots \circ f_{n-1}^n\]
along with the convention that for $n=m$, $f_m^n =\textrm{id}_{A_m}$.

Now we extend the projective Fraisse category to a larger category which should be thought off as potential limit objects for $\mathcal{F}$.

Given $\mathcal{F}$ a projective Fraisse category, define category $\mathcal{F}^\omega$ as follows. The category $\mathcal{F}^\omega$ consists of all $\mathbb{L}$ such that $\mathbb{L}=\varprojlim (A_n,f_n^{n+1})$ where each $A_n$ is a structure in $\mathcal{F}$ and each $f_n^{n+1}:A_{n+1} \to A_n$ is a morphism in $\mathcal{F}$. So the elements of $\mathcal{F} ^\omega$ are inverse limits of inverse sequences in the category $\mathcal{F}$. Morphisms in category $\mathcal{F}^\omega$ are defined like this. For $\mathbb{L}=\varprojlim (A_n,f_n^{n+1})$ and $\mathbb{M} = \varprojlim (B_n,g_n^{n+1})$, then $h:\mathbb{L} \to \mathbb{M}$ is a \textbf{morphism} if there is an increasing sequence $i_0<i_1<i_2<\cdots$ and morphisms $h_n: A_{i_n} \to B_n$ in $\mathcal{F}$ such that for any $n$, $\pi^{\mathbb{M}}_n \circ h = h_n \circ \pi^{\mathbb{L}}_{i_n}$. A morphism $h:\mathbb{L} \to \mathbb{M}$ is an \textbf{isomorphism} if $h$ is a morphism which is additionally a bijection and $h^{-1}$ is a morphism $\mathbb{M} \to \mathbb{L}$. We note the following fact which is well-known in projective Fraisse theory and easy to derive directly from the definition of isomorphism, see \cite{pan_sol}. 

\begin{prop}\label{prop_auts_are_diag}
    A map $h: \mathbb{L}=\varprojlim (A_n,f_n^{n+1}) \to \mathbb{M} =\varprojlim (B_n,g_n^{n+1})$ is an isomorphism iff there exist natural numbers $i_0<i_1<i_2<i_3<
    \cdots$ and morphisms $\{h_j\}_{j\in\N}$ in $\mathcal{F}$ such that $h_j: A_{i_{j+1}} \to B_{i_j}$ for $j$ even and $h_j:B_{i_{j+1}} \to A_{i_j}$ for $j$ odd; for every even $j$, $h_j \circ h_{j+1}=g_{i_j}^{i_{j+2}}$ and for every odd $j$, $h_j \circ h_{j+1}=f_{i_j}^{i_{j+2}}$; and for every even $j$ $\pi_{i_j}^{\mathbb{M}} \circ h = h_j \circ \pi_{i_{j+1}}^\mathbb{L}$. 
\end{prop}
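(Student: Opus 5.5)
The plan is to prove the two directions separately, using only the definitions of morphism and isomorphism in $\mathcal{F}^\omega$ and the fact that every structure in $\mathcal{F}$ is finite.

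\emph{Forward direction.} Suppose $h$ is an isomorphism. Then $h$ is a morphism, so there are indices $k_0<k_1<\cdots$ and morphisms $p_n: A_{k_n}\to B_n$ with $\pi^{\mathbb{M}}_n\circ h=p_n\circ\pi^{\mathbb{L}}_{k_n}$. Likewise $h^{-1}$ is a morphism, giving $l_0<l_1<\cdots$ and $q_n:B_{l_n}\to A_n$ with $\pi^{\mathbb{L}}_n\circ h^{-1}=q_n\circ \pi^{\mathbb{M}}_{l_n}$.

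First I would note that if a morphism $p:A_k\to B_n$ satisfies $\pi^\mathbb{M}_n\circ h = p\circ \pi^\mathbb{L}_k$, then for every $k'\ge k$ the map $p\circ f_k^{k'}$ satisfies the same identity with $k'$ in place of $k$. Composites of morphisms are morphisms, so these are still in $\mathcal{F}$. This lets us increase indices freely.

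Next I would build $i_0<i_1<\cdots$ recursively:
\begin{itemize}
\item for $j$ even, choose $i_{j+1}>i_j$ with $i_{j+1}\ge k_{i_j}$, and set $h_j = p_{i_j}\circ f_{k_{i_j}}^{i_{j+1}}$;
\item for $j$ odd, choose $i_{j+1}> i_j$ with $i_{j+1}\ge l_{i_j}$, and set $h_j=q_{i_j}\circ g_{l_{i_j}}^{i_{j+1}}$.
\end{itemize}
By construction, for even $j$ we have $\pi^\mathbb{M}_{i_j}\circ h=h_j\circ \pi^\mathbb{L}_{i_{j+1}}$, and for odd $j$ we have $\pi^\mathbb{L}_{i_j}\circ h^{-1}=h_j\circ\pi^\mathbb{M}_{i_{j+1}}$.

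Now take $j$ even. Then
\[h_j\circ h_{j+1}\circ\pi^\mathbb{M}_{i_{j+2}} = h_j\circ \pi^\mathbb{L}_{i_{j+1}}\circ h^{-1} = \pi^\mathbb{M}_{i_j}\circ h\circ h^{-1}=\pi^\mathbb{M}_{i_j}=g_{i_j}^{i_{j+2}}\circ \pi^\mathbb{M}_{i_{j+2}}.\]
The odd case is symmetric and uses $h^{-1}\circ h$.

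This step is the main (mild) obstacle: to conclude $h_j\circ h_{j+1}=g_{i_j}^{i_{j+2}}$, we need $\pi^\mathbb{M}_{i_{j+2}}$ to be surjective. It is, because every bonding map is surjective and the structures are finite. Given $b\in B_n$, choose preimages successively up the sequence (using surjectivity), or use compactness and König's lemma, to get a thread in $\mathbb{M}$ through $b$.

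\emph{Reverse direction.} Suppose the $i_j$ and $h_j$ are given as in the statement. The even-indexed maps exhibit $h$ as a morphism, with index sequence $i_{2m+1}$ and maps $h_{2m}$ into $B_{i_{2m}}$. To get $h_n$ into $B_n$ itself, compose with the bonding maps $g^{i_{2m}}_n$ for suitable $m$; the indices stay increasing after passing to a subsequence.

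Next define $h':\mathbb{M}\to\mathbb{L}$ using the odd maps. Its $A_{i_j}$-coordinate ($j$ odd) is $h_j\circ\pi^\mathbb{M}_{i_{j+1}}$. These coordinates are coherent, since
\[f^{i_{j+2}}_{i_j}\circ h_{j+2}=h_j\circ h_{j+1}\circ h_{j+2}=h_j\circ g^{i_{j+3}}_{i_{j+1}}.\]
So $h'$ is a well-defined morphism.

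To show $h'$ and $h$ are mutually inverse, compute coordinates:
\[\pi^\mathbb{M}_{i_j}\circ h\circ h' = h_j\circ\pi^\mathbb{L}_{i_{j+1}}\circ h' = h_j\circ h_{j+1}\circ \pi^\mathbb{M}_{i_{j+2}}=\pi^\mathbb{M}_{i_j}.\]
Since the $i_j$ are cofinal, this gives $h\circ h'=\mathrm{id}$; symmetrically $h'\circ h=\mathrm{id}$. Therefore $h$ is a bijection with morphism inverse $h^{-1}=h'$, i.e. $h$ is an isomorphism.
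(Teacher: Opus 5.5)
Your proof is correct. It is exactly the direct unwinding of the definitions of morphism and isomorphism in $\mathcal{F}^\omega$ that the paper has in mind; the paper omits the argument, calling it easy and referring to Panagiotopoulos--Solecki. You also correctly handle the points that need care: the surjectivity of the projections $\pi^{\mathbb{L}}_n$ and $\pi^{\mathbb{M}}_n$ so they can be cancelled, the coherence of the coordinates of $h'$ via $h_j\circ h_{j+1}\circ h_{j+2}$, and reindexing with the bonding maps to obtain genuine $\mathcal{F}^\omega$-morphisms.
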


Note $\mathcal{F}^\omega$ contains the structures in $\mathcal{F}$ (just take inverse limits with the identity as each bonding map) and for any morphism $f:A \to B$ in $\mathcal{F}^\omega$ with $A,B$ in $\mathcal{F}$ we have that $f$ is in $\mathcal{F}$. Further, a morphism $f: \mathbb{L} \to A$ in $\mathcal{F}^\omega$ where $A \in \mathcal{F}$ is exactly a map of the form $f= f_n \circ \pi^{\mathbb{L}}_n$ for some $f_n \in \mathcal{F}$.

Now we add topological information to $\mathcal{F}^\omega$. We take all finite structures $A$ in $\mathcal{F}$ with the discrete topology. Then, any $\mathbb{L}=\varprojlim(A_n,f_n^{n+1})$ inherits its topology from the product topology on $\prod_{n \in \N} A_n$ and in particular, is a compact, metrizable, zero-dimensional space. Note that morphisms in $\mathcal{F}^\omega$ are continuous and isomorphisms are homeomorphisms because a continuous bijection between compact Hausdorff spaces is automatically a homeomorphism.

Finally, note that any $\mathbb{L}=\varprojlim(A_n,f_n^{n+1})$ is an $\mathcal{L}$-structure in the following sense. For any relation symbol $S \in \mathcal{L}$ define:
\[(x_n)_{n\in\N}S^{\mathbb{L}}(y_n)_{n\in\N} \iff \forall n \in \N \ (x_n S^{A_n} y_n)\]

and for any function symbol $\sigma \in \mathcal{L}$ of arity $m$ define:
\[\sigma^{\mathbb{L}}((x^1_n)_{n\in \N}, (x^2_n)_{n\in\N}, \ldots, (x^m_n)_{n\in\N}) = (\sigma^{A_n}(x_n^1, \ldots , x_n^m))_{n\in\N} \]

In particular, $R^{\mathbb{L}}$ is a compact, reflexsive, symmetric binary relation on $\mathbb{L}$. Further, morphisms in $\mathcal{F}^\omega$ are model-theoretic morphisms in that they preserve the relations and functions in $\mathcal{L}$.

A main theorem of \cite{irwin_solecki} is the following:

\begin{thm}[Irwin-Solecki \cite{irwin_solecki}, see also Panagiotoupoulos-Solecki \cite{pan_sol} ]\label{thm_irw_sol}
    Let $\mathcal{F}$ be a projective Fraisse category. There is a unique up to isomorphism $\mathbb{F} \in \mathcal{F}^\omega$ such that:
    \begin{enumerate}[(i)]
        \item for each $A \in \mathcal{F}$, there is a morphism in $\mathcal{F}^\omega$ from $\mathbb{F} \to A$
        \item (projective extension property) for each $A, B \in \mathcal{F}$ and morphisms $f: \mathbb{F} \to A$ and $g:B \to A$, there is a morphism $h:\mathbb{F} \to B$ with $g \circ h=f$
    \end{enumerate}
\end{thm}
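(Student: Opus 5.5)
We prove existence by a back-and-forth construction of an inverse sequence, and uniqueness by a back-and-forth between two candidate limits.

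\textbf{Existence.} By (ii) in the definition of a projective Fraisse category, fix a countable list $\{C_k\}_{k\in\N}$ of representatives of the isomorphism classes in $\mathcal{F}$. Since all structures are finite, for any $A,B\in\mathcal{F}$ the set $\Epi(B,A)$ is finite. We build $A_0\leftarrow A_1\leftarrow\cdots$ with bonding maps $f_n^{n+1}\in\mathcal{F}$ and fix a bookkeeping enumeration of \emph{tasks}, where each task is scheduled infinitely often or at least once after it becomes available. There are two kinds of task.
\begin{enumerate}[(a)]
    \item For a given $k$, ensure that some $A_n$ maps onto $C_k$. We use (iii): given $A_n$, choose $D$ with morphisms $D\to A_n$ and $D\to C_k$, and set $A_{n+1}=D$.
    \item For a triple $(m,g,B)$ with $B=C_k$, $g\in\Epi(B,A_m)$ and $m\le n$, find $N$ and $h\colon A_N\to B$ with $g\circ h=f_m^N$. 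We use (iv): amalgamate $f_m^n\colon A_n\to A_m$ with $g\colon B\to A_m$ to obtain $D$ together with $f'\colon D\to A_n$ and $g'\colon D\to B$. Set $A_{n+1}=D$ and $f_n^{n+1}=f'$.
\end{enumerate}
At each stage only finitely many new triples appear, so countably many tasks arise in total, and a standard dovetailing handles all of them. Let $\mathbb{F}=\varprojlim(A_n,f_n^{n+1})$.

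Property (i) follows from the tasks in (a) together with the remark that $\pi_n^{\mathbb{F}}$ followed by a morphism in $\mathcal{F}$ is a morphism in $\mathcal{F}^\omega$. For (ii), a morphism $f\colon\mathbb{F}\to A$ has the form $f=f_m\circ\pi_m^{\mathbb{F}}$. Given $g\colon B\to A$, amalgamate $f_m$ with $g$ to obtain $B'$ with maps $u\colon B'\to A_m$ and $v\colon B'\to B$ satisfying $f_m\circ u=g\circ v$. Replace $B'$ by its isomorphic copy among the $C_k$. Task (b) for $(m,u,B')$ then yields $h'\colon A_N\to B'$ with $u\circ h'=f_m^N$, and we take $h=v\circ h'\circ\pi_N^{\mathbb{F}}$. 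Then
\[
g\circ h = g\circ v\circ h'\circ\pi_N^{\mathbb{F}} = f_m\circ u\circ h'\circ\pi_N^{\mathbb{F}} = f_m\circ f_m^N\circ\pi_N^{\mathbb{F}} = f\circ\pi_m^{\mathbb{F}}\ldots
\]
more precisely, $f_m\circ f_m^N\circ\pi_N^{\mathbb{F}}=f_m\circ\pi_m^{\mathbb{F}}=f$, as required.

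\textbf{Uniqueness.} Let $\mathbb{F}=\varprojlim(A_n)$ and $\mathbb{G}=\varprojlim(B_n)$ both satisfy (i) and (ii). We construct the zigzag of Proposition~\ref{prop_auts_are_diag}.
\begin{enumerate}[1.]
    \item Start with a morphism $\mathbb{F}\to B_0$ given by (i) for $\mathbb{F}$. It has the form $h_0\circ\pi_{i}^{\mathbb{F}}$ with $h_0\colon A_i\to B_0$ in $\mathcal{F}$.
    \item Apply (ii) for $\mathbb{G}$ to the morphism $h_0\colon A_i\to B_0$ and the morphism $\pi_0^{\mathbb{G}}\colon\mathbb{G}\to B_0$. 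This gives a morphism $\mathbb{G}\to A_i$, of the form $h_1\circ\pi_j^{\mathbb{G}}$, with $h_0\circ h_1=g_0^j$.
    \item Apply (ii) for $\mathbb{F}$ to $h_1\colon B_j\to A_i$ and $\pi_i^{\mathbb{F}}$. This gives $h_2\colon A_k\to B_j$ with $h_1\circ h_2=f_i^k$, and we continue in the same way.
\end{enumerate}
Indices can always be increased by precomposing with bonding maps, so we can arrange the strictly increasing index pattern the proposition requires.

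The main obstacle is verifying that the resulting coherent zigzag indeed defines an isomorphism $h$ with $\pi^{\mathbb{G}}\circ h=h_j\circ\pi^{\mathbb{F}}$. This is exactly the content of Proposition~\ref{prop_auts_are_diag}, which we may invoke. Beyond that, the only subtle point in existence is the bookkeeping, and it is legitimate precisely because each $\Epi(B,A)$ is finite.
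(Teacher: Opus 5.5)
Your argument is correct and is the standard one from the cited sources; the paper quotes this theorem without proof. You build a fundamental sequence by dovetailing joint-projection and amalgamation tasks, then get uniqueness from a back-and-forth zigzag fed into Proposition~\ref{prop_auts_are_diag}.

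Two small points. First, at stage $n$ the new triples $(n,g,C_k)$ form a countably infinite family (a finite batch for each $k$), not a finite one. This is harmless, since countability is all the dovetailing needs. Second, before invoking the proposition you should actually define $h$ coordinatewise by $\pi^{\mathbb{G}}_{i_{2k}}\circ h=h_{2k}\circ\pi^{\mathbb{F}}_{i_{2k+1}}$. This definition is consistent because $g_{i_{2k}}^{i_{2k+2}}\circ h_{2k+2}=h_{2k}\circ h_{2k+1}\circ h_{2k+2}=h_{2k}\circ f_{i_{2k+1}}^{i_{2k+3}}$.
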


The unique up to isomorphism structure $\mathbb{F}$ as in Theorem \ref{thm_irw_sol} is called the \textbf{projective Fraisse limit of $\mathcal{F}$}. The following application of the projective extension property is well-known in projective Fraisse theory:

\begin{lem}\label{lem_ext_to_aut}
    Let $\mathbb{F}=\varprojlim (A_n,\varphi_n^{n+1})$ be the projective Fraisse limit of category $\mathcal{F}$ with projection maps $\pi_n:\mathbb{F} \to A_n$. For any $n\geq m$, $f:A_n \to A_m$ a morphism in $\mathcal{F}$, there exists $\tilde{f}$ an automorphism of $\mathbb{F}$ such that $\pi_m \circ \tilde{f}=f\circ \pi_n$.
\end{lem}


\subsection{Approximate Ramsey property}

For any $A$ in a projective Fraisse category $\mathcal{F}$ we define a metric on $A$, denoted $d_A$ using the graph relation $R^A$. Precisely, for any $a \in A$ $d_A(a,a)=0$ and for any $a \neq b$ in $A$, $d_A(a,b)=l$ if $l$ is the least natural number such that there exists $a_0,a_1,\ldots, a_l \in A$ with $a_0=a$, $a_l=b$ and $a_iR^A a_{i+1}$ for all $i\leq l$. 

For $A,B \in \mathcal{F}$ we set the metric $d_{\Epi(B,A)}$ on the set $\Epi(B,A)$ by:
\[d_{\Epi(B,A)}(f,g) = \sup_{b\in B} d_A(f(b),g(b))\]

In a metric space $(X,d)$, for any $\varepsilon>0$ and $A \subseteq X$ we use the notation 
\[[A]_\varepsilon= \{x \in X \ : \ \exists a \in A \ (d(a,x)\leq \varepsilon\}\]

Here is the main new combinatorial definition:

\begin{defn}\label{defn_arp}
    A projective Fraisse category $\mathcal{F}$ has the \textbf{approximate Ramsey property} if for any $A, B \in \mathcal{F}$ and any $d \in \N$ there exists $C \in \mathcal{F}$ such that for any $\chi:\Epi(C,A) \to \{1,2,\ldots, d\}$ there exists $f \in \Epi(C,B)$ and $1\leq i \leq d$ such that 
    \[\Epi(B,A) \circ f \subseteq [\chi^{-1}(i)]_1\]
\end{defn}

We say $C \in \mathcal{F}$ \textbf{witnesses the approximate Ramsey property for $A,B,d$} if for any coloring $\chi:\Epi(C,A) \to \{1,\ldots, d\}$ there is $f \in \Epi(C,B)$ and $i \in \{1,\ldots, d\}$ such that $\Epi(B,A) \circ f \subset [\chi^{-1}(i)]_1$.

Note that the approximate Ramsey property is a weakening of the usual Ramsey property considered for projective Fraisse categories.

Finally we have the following weakening of the approximate Ramsey property which is very natural from the perspective of Ramsey theory:

\begin{defn}
    Let $A$ in $\mathcal{F}$. Then $A$ is said to have \textbf{approximate Ramsey degree $\leq k$} if for any $B \in \mathcal{F}$ and $d \in \N$ there exists $C \in \mathcal{F}$ such that for any $\chi:\Epi(C,A) \to \{1,2,\ldots, d\}$ there is $f \in \Epi(C,B)$ and $I \subseteq \{1,2,\ldots, d\}$ with $\norm{I} \leq k$ such that 
    \[\Epi(B,A) \circ f \subseteq [\chi^{-1}[I]]_1\]
\end{defn}

We define \textbf{the approximate Ramsey degree of $A$} to be the minimum $k \in \N$ such that $A$ has approximate Ramsey degree $\leq k$ and write $\textrm{Ardeg}(A) =k$. We say $C$ \textbf{witnesses that $\rdeg(A) \leq k$ for} $B,d$ if for any coloring $\chi:\Epi(C,A) \to \{1,2,\ldots,d\}$, there is $f \in \Epi(C,B)$ and $I \subseteq \{1,2,\ldots,d\}$ with $\norm{I} \leq k$ such that $\Epi(B,A) \circ f \subseteq [\chi^{-1}(I)]_1$.

If no such $k$ exists (i.e., the approximate Ramsey degree of $A$ is not $\leq k$ for all $k \in \N$) then we say $A$ has \textbf{infinite approximate Ramsey degree} and write $\Ard(A) =\infty$. 

\begin{rmk}\label{rmk_arp_extend}
    If $C \in \mathcal{F}$ witnesses the approximate Ramsey property for $A,B,d$ and $D \in \mathcal{F}$ is such that $\Epi(D,C) \neq \emptyset$, then $D$ witnesses the approximate Ramsey property for $A,B,d$. Similarly if $C$ witnesses that $\rdeg(A) \leq k$ for $B,d$, and $D$ is such that $\Epi(D,C) \neq \emptyset$, then $D$ witnesses that $\rdeg(A) \leq k$ for $B,d$. 
\end{rmk}

\subsection{Transitive projective Fraisse limits}\label{subsec_ntn_limit}

\begin{defn}
    A projective Fraisse category $\mathcal{F}$ is \textbf{transitive} if $R^\mathbb{F}$ is a transitive relation, where $\mathbb{F}$ is the projective Fraisse limit of $\mathcal{F}$. 
\end{defn}

Note that for a transitive projective Fraisse category with limit $\mathbb{F}$, the relation $R^\mathbb{F}$ is a compact equivalence relation. Thus, the quotient $\mathbb{F} / R^\mathbb{F}$ equipped with the quotient topology is a compact, metrizable space. We denote $\mathbb{F} / R^\mathbb{F}$ by $F$ and we use $q:\mathbb{F} \to F$ to denote the quotient map. We fix a metric $d_F$ on $F$. 

We will now investigate in some detail how the structure of $\mathbb{F}$ and the topology on $F$ interact. Note that since $\mathbb{F}$ is in $\mathcal{F}^\omega$, we have that
\[\mathbb{F}= \varprojlim (A_n, \varphi_n^{n+1})\]
where each $A_n$ is a structure in $\mathcal{F}$ and $\varphi_n^{n+1}:A_{n+1} \to A_n$ is a morphism in $\mathcal{F}$.
We use $\pi_n:\mathbb{F} \to A_n$ as the $n$th projection map. 

\begin{lem}\label{lem_rel_disj}
    Let $a,b \in A_n$. We have:
    \[q[\pi_n^{-1}(a)] \cap q[\pi_n^{-1}(b)] \neq \emptyset \iff aR^{A_n}b\]
\end{lem}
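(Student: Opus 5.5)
We prove the two implications separately, unwinding the definition of $q$ and of $R^{\mathbb{F}}$ as the coordinatewise relation.

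For the forward direction, suppose $q[\pi_n^{-1}(a)] \cap q[\pi_n^{-1}(b)] \neq \emptyset$. Then there are $x \in \pi_n^{-1}(a)$ and $y \in \pi_n^{-1}(b)$ with $q(x)=q(y)$. Since $q$ is the quotient by the equivalence relation $R^{\mathbb{F}}$, this means $x R^{\mathbb{F}} y$. By the definition of $R^{\mathbb{F}}$ on an inverse limit, $x_m R^{A_m} y_m$ holds for every $m$. In particular $x_n=a$ and $y_n=b$ give $aR^{A_n}b$. This direction is immediate.

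For the reverse direction, suppose $aR^{A_n}b$. We must produce $x,y\in\mathbb{F}$ with $\pi_n(x)=a$, $\pi_n(y)=b$ and $xR^{\mathbb{F}}y$; then $q(x)=q(y)$ lies in the intersection. We build the coordinates $x_m,y_m$ for $m \geq n$ inductively so that $x_m R^{A_m} y_m$ and $\varphi_m^{m+1}(x_{m+1})=x_m$, $\varphi_m^{m+1}(y_{m+1})=y_m$.
\begin{itemize}
\item Start with $x_n=a$, $y_n=b$.
\item Given $x_mR^{A_m}y_m$, use that $\varphi_m^{m+1}$ is a \emph{surjective morphism}: for the relation symbol $R$ it has the lifting property that any $R^{A_m}$-related pair has an $R^{A_{m+1}}$-related preimage pair. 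Choose such a pair $x_{m+1}R^{A_{m+1}}y_{m+1}$.
\item For $m<n$, set $x_m=\varphi_m^n(a)$ and $y_m=\varphi_m^n(b)$. These are related because morphisms preserve $R$.
\end{itemize}
The resulting threads $x=(x_m)$ and $y=(y_m)$ are points of $\mathbb{F}$, they satisfy $xR^{\mathbb{F}}y$ coordinatewise, and they project to $a$ and $b$.

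The only substantive point is the use of the surjectivity clause in the definition of morphism to lift edges through each bonding map. Because the construction is a direct inductive choice along the countable sequence, no compactness argument is needed. Transitivity of $\mathcal{F}$ is used only so that $q$ is the quotient by the equivalence relation $R^{\mathbb{F}}$, which makes $q(x)=q(y)$ equivalent to $xR^{\mathbb{F}}y$.
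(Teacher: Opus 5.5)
Your proof is correct and follows the paper's idea. The forward direction unwinds $R^{\mathbb{F}}$ coordinatewise. The reverse direction lifts the edge $aR^{A_n}b$ through each bonding map using the surjectivity clause for $R$. The paper packages this lifting as a finitely splitting tree and invokes K\"onig's lemma, adding a superfluous requirement $x_m\neq y_m$ that makes the tree empty when $a=b$. Your direct inductive choice suffices because every finite stage extends, and it also handles $a=b$ and the coordinates below $n$ explicitly.
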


\begin{proof}
   Suppose that $q[\pi_n^{-1}(a)] \cap q[\pi_n^{-1}(b)] \neq \emptyset$. Let $x \in q[\pi_n^{-1}(a)] \cap q[\pi_n^{-1}(b)]$. Then, $x= q((x_n)_{n \in\N})$ with $x_n=a$ and $x=q(y_n)_{n\in\N})$ with $y_y =b$. So $(x_n)_{n \in\N} R^{\mathbb{F}} (y_n)_{n\in\N}$ which implies $x_n R^{A_n}y_n$ i.e, $a R^{A_n}b$. 

   For the converse direction, suppose $a R^{A_n} b$. Let $\mathcal{T}$ be the set of all finite strings of pairs:
   \[\langle (x_n,y_n),(x_{n+1},y_{n+1}),\ldots, (x_N,y_N) \rangle\]
   where for all $n \leq m\leq N$
   \begin{enumerate}
       \item $x_n=a$ and $y_n =b$
       \item $x_m,y_m \in A_m$ with $x_m \neq y_m$ for all $m$
       \item $x_mR^{A_m}y_m$
       \item $\varphi_m(x_{m+1})=x_m$ and $\varphi_m(y_{m+1})=y_m$
   \end{enumerate}

   Under the extension relation, $\mathcal{T}$ is a tree as it is easy to check that an initial substring of an element of $\mathcal{T}$ is an element of $\mathcal{T}$. Since each $A_m$ is finite, $\mathcal{T}$ is finitely splitting, i.e., for each 
   \[\langle (x_n,y_n),(x_{n+1},y_{n+1}),\ldots, (x_N,y_N) \rangle \in \mathcal{T}\]
   there are only finitely many pairs $(x_{N+1},y_{N+1})$ such that 
   \[\langle (x_n,y_n),(x_{n+1},y_{n+1}),\ldots, (x_N,y_N), (x_{N+1},y_{N+1}) \rangle \in \mathcal{T}\] 
   The tree $\mathcal{T}$ is infinite because given any element  $\langle (x_n,y_n),(x_{n+1},y_{n+1}),\ldots, (x_N,y_N) \rangle$, because we know that $\varphi_N$ is a surjective morphism and so surjective on relation $R^{A_N}$, there is some $z,w \in A_{N+1}$ with $zR^{A_{N+1}}w$ and $\varphi_N(z)=x_N$ and $\varphi_N(w)=y_N$. Because $x_N \neq y_N$ by condition (2) of elements of $\mathcal{T}$ we know $z \neq w$. So  $\langle (x_n,y_n),(x_{n+1},y_{n+1}),\ldots, (x_N,y_N), (z,w) \rangle \in \mathcal{T}$. By Konig's lemma, $\mathcal{T}$ has an infinite branch
   \[\langle (x_n,y_n),(x_{n+1},y_{n+1}),\ldots \rangle\]
   Let $\mathbf{x}, \mathbf{y \in \mathbb{F}}$ with $\pi_m(\mathbf{x})=x_m$ and $\pi_m(\mathbf{y})=y_m$ for all $m \geq n$. Then, $\pi_n(\mathbf{x})=a$ and $\pi_n(\mathbf{y})=b$ by condition (1) on $\mathcal{T}$ and $q(\mathbf{x})=q(\mathbf{y})$ by condition (3) on $\mathcal{T}$. 
\end{proof}

For a metric space $(X,d)$ and $A,B \subseteq X$, let $d(A,B)=\inf \{d(a,b) \ : a\in A, b\in B\}$. Let $\textrm{diam}_d(A)=\sup\{d(a,a') \ : \ a,a' \in A\}$.

\begin{lem}\label{lem_metric_spread}
    For any $n \in \N$, there exists $\varepsilon >0$ such that for all $\mbf{x},\mbf{y}\in \mathbb{F}$
    \[d_F(q(\mbf{x}),q(\mbf{y}))<\varepsilon \implies \pi_n(\mbf{x}) R^{A_n} \pi_n(\mbf{y})\]
\end{lem}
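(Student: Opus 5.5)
The plan is a compactness argument on the finitely many non-adjacent pairs in $A_n$, using Lemma \ref{lem_rel_disj}. Fix $n$. For each $a \in A_n$ the set $\pi_n^{-1}(a)$ is clopen in $\mathbb{F}$, hence compact. Since $q$ is continuous, $K_a := q[\pi_n^{-1}(a)]$ is a compact subset of $F$. Now take any pair $a,b \in A_n$ with $\neg(a R^{A_n} b)$. By Lemma \ref{lem_rel_disj} we get $K_a \cap K_b = \emptyset$. Two disjoint nonempty compact sets in the metric space $(F,d_F)$ are at positive distance, so $d_F(K_a,K_b) > 0$. (If one of them were empty, the pair would impose no constraint at all; in fact they are nonempty because $\pi_n$ is surjective.)

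Next define $\varepsilon$ as the minimum of $d_F(K_a,K_b)$ over all pairs $(a,b) \in A_n \times A_n$ with $\neg(a R^{A_n} b)$. If there are no such pairs, set $\varepsilon = 1$; the conclusion is then trivial, since every pair is $R^{A_n}$-related. Because $A_n$ is finite, this is a minimum of finitely many positive numbers, so $\varepsilon > 0$.

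Finally we verify the implication by contrapositive. Let $\mbf{x},\mbf{y} \in \mathbb{F}$ and put $a = \pi_n(\mbf{x})$, $b = \pi_n(\mbf{y})$, and suppose $\neg(a R^{A_n} b)$. Then $q(\mbf{x}) \in K_a$ and $q(\mbf{y}) \in K_b$, so $d_F(q(\mbf{x}),q(\mbf{y})) \geq d_F(K_a,K_b) \geq \varepsilon$. The only substantive ingredient is the forward direction of Lemma \ref{lem_rel_disj}, which turns non-adjacency into disjointness of the images; the rest is routine compactness, and I expect no real obstacle.
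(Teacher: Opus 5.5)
Your proposal is correct and is essentially the paper's proof: Lemma \ref{lem_rel_disj} makes the compact sets $q[\pi_n^{-1}(a)]$ and $q[\pi_n^{-1}(b)]$ disjoint for non-adjacent $a,b$, and $\varepsilon$ comes from the minimum of the finitely many resulting positive distances. Taking $\varepsilon$ equal to that minimum, rather than strictly below it as the paper does, works because the hypothesis is the strict inequality $d_F<\varepsilon$; your separate treatment of the case with no non-adjacent pairs covers a degenerate case the paper leaves implicit.
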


\begin{proof}
    Let $n \in \N$. For any $a,b \in A_n$ with $\neg(a R^{A_n}b)$, we have by Lemma \ref{lem_rel_disj} that $q[\pi_n^{-1}(a)]$ and $q[\pi_n^{-1}(b)]$ are disjoint. Note $q[\pi_n^{-1}(a)]$ and $q[\pi_n^{-1}(b)]$ are compact as well. So $d_F(q[\pi_n^{-1}(a)], q[\pi_n^{-1}(b)])>0$. Since $A_n$ is finite let $\varepsilon$ such that
    \[0<\varepsilon <\min \{d_F(q[\pi_n^{-1}(a)], q[\pi_n^{-1}(b)]) \ : \ a,b \in A_n, \ \neg( a  R^{A_n}b) \}\]
    This $\varepsilon$ works to witness the lemma.
\end{proof}

For the next lemma note the following fact. If $K_n$ is a sequence of compact sets in a compact metric space $(X,d)$ with $K_{n+1} \subseteq K_n$ and $\bigcap_{n\in\N}K_n$ is a singleton, then $\textrm{diam}_d(K_n) \to 0$ as $n \to \infty$.

\begin{lem}\label{lem_graphdistsmall}
    Fix $M \in \N$. For all $\varepsilon>0$, there exists $N \in \N$ such that for all $\mbf{x},\mbf{y} \in \mathbb{F}$:
    \[d_{A_N}(\pi_N(\mbf{x}),\pi_N(\mbf{y}))\leq M \implies d_F(q(\mbf{x}),q(\mbf{y}))<\varepsilon\]
\end{lem}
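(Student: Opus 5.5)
We reduce the case of general $M$ to the case $M=1$ via the triangle inequality. First we show that for every $\delta>0$ there is $N$ such that
\[\pi_N(\mbf{x}) R^{A_N} \pi_N(\mbf{y}) \implies d_F(q(\mbf{x}),q(\mbf{y}))<\delta .\]
Granting this, we apply it with $\delta=\varepsilon/M$. Suppose $d_{A_N}(\pi_N(\mbf{x}),\pi_N(\mbf{y}))\leq M$, and pick a path $a_0=\pi_N(\mbf{x}), a_1,\ldots, a_l=\pi_N(\mbf{y})$ in $A_N$ with $l\leq M$ and consecutive points $R^{A_N}$-related. Choose $\mbf{z}_i\in\mathbb{F}$ with $\pi_N(\mbf{z}_i)=a_i$; this is possible because $\pi_N$ is surjective, as $\mathbb{F}$ is an inverse limit of surjections. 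Take $\mbf{z}_0=\mbf{x}$ and $\mbf{z}_l=\mbf{y}$. By the $M=1$ case each $d_F(q(\mbf{z}_i),q(\mbf{z}_{i+1}))<\varepsilon/M$, so summing gives $d_F(q(\mbf{x}),q(\mbf{y}))<\varepsilon$.

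For the case $M=1$, set, for $a\in A_n$,
\[K_a=\bigcup\{q[\pi_n^{-1}(b)] \ : \ b R^{A_n} a\}.\]
It suffices to show that
\[\max_{a\in A_n}\textrm{diam}_{d_F}(K_a)\to 0 \quad\text{as } n \to \infty .\]
Indeed, if $\pi_N(\mbf{x})R^{A_N}\pi_N(\mbf{y})$, then both $q(\mbf{x})$ and $q(\mbf{y})$ lie in $K_{\pi_N(\mbf{x})}$.

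We prove this by contradiction together with compactness. Suppose instead that there are $\delta>0$, infinitely many $n$, and points $\mbf{x}^n,\mbf{y}^n$ with $\pi_n(\mbf{x}^n)R^{A_n}\pi_n(\mbf{y}^n)$ but $d_F(q(\mbf{x}^n),q(\mbf{y}^n))\geq\delta$. Since $\mathbb{F}$ is compact, pass to a subsequence with $\mbf{x}^n\to\mbf{x}$ and $\mbf{y}^n\to\mbf{y}$. Now fix $m$. For large $n$ in the subsequence we have $\pi_m(\mbf{x}^n)=\pi_m(\mbf{x})$ and $\pi_m(\mbf{y}^n)=\pi_m(\mbf{y})$. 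Applying the morphism $\varphi_m^n$, which preserves $R$, to the relation at level $n\geq m$ gives $\pi_m(\mbf{x})R^{A_m}\pi_m(\mbf{y})$. As $m$ was arbitrary, $\mbf{x}R^{\mathbb{F}}\mbf{y}$, so $q(\mbf{x})=q(\mbf{y})$. But $q$ is continuous, so
\[d_F(q(\mbf{x}),q(\mbf{y}))=\lim d_F(q(\mbf{x}^n),q(\mbf{y}^n))\geq\delta,\]
a contradiction. Hence there is $N$ past which no such bad pairs exist. This handles the case $M=1$ without needing the nested-compact-sets fact stated before the lemma.

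The main point to watch is this compactness step. Two things need checking there. First, relatedness at level $n$ must descend to all lower levels $m\leq n$; this holds because the bonding maps $\varphi_m^n$ are morphisms. Second, the conclusion must hold for all $N$ beyond some threshold, not just for infinitely many $N$. The contradiction argument gives exactly this, since it runs along an arbitrary subsequence of bad $n$.
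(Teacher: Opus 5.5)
Your proof is correct, but it takes a different route from the paper's.

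\textbf{The paper's route.} It first shows that for one large $N$, every fiber image $q[\pi_N^{-1}(b)]$, $b\in A_N$, has diameter $<\varepsilon/M$. This uses the nested-compact-sets fact at each point together with a finite subcover of $\mathbb{F}$ by clopen sets $\pi_n^{-1}(a)$. It then chains these fibers along a path in $A_N$ using Lemma \ref{lem_rel_disj}, specifically its K\"onig's-lemma direction: $aR^{A_N}b$ forces $q[\pi_N^{-1}(a)]\cap q[\pi_N^{-1}(b)]\neq\emptyset$.

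\textbf{Your route.} You prove the single-edge case directly by sequential compactness. The only ingredients are that $R$ descends along the bonding maps, that $q$ collapses $R^{\mathbb{F}}$-related pairs, and that $q$ is continuous. You then chain using only surjectivity of $\pi_N$ and the triangle inequality.

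\textbf{What each buys.} Your argument bypasses Lemma \ref{lem_rel_disj} entirely. Its bookkeeping is also cleaner: $l\le M$ steps, each costing $<\varepsilon/M$. Chaining $l+1$ fibers of diameter $<\varepsilon/M$, as the paper does, literally gives $<(l+1)\varepsilon/M$, so the paper's argument really needs $\varepsilon/(M+1)$. The paper's route, in exchange, yields a stronger geometric statement: the mesh of the covers $\{q[\pi_N^{-1}(b)] : b\in A_N\}$ tends to $0$. That statement is not needed for this lemma.

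\textbf{Two small cleanups.}
\begin{enumerate}
\item Drop the sentence reducing to $\max_a \textrm{diam}_{d_F}(K_a)\to 0$; it is never used. Your contradiction argument proves the pairwise $M=1$ claim directly, not the bound on $K_a$. Two points of $K_a$ come from neighbours of $a$ that need not be related to each other, so the negation you wrote down does not address $K_a$.
\item Handle the case $\pi_N(\mathbf{x})=\pi_N(\mathbf{y})$ separately. This is the path of length $0$, and it includes the case $M=0$. Use reflexivity of $R^{A_N}$ and the $M=1$ case, not the path with $\mathbf{z}_0=\mathbf{x}$, $\mathbf{z}_l=\mathbf{y}$, which would force $\mathbf{x}=\mathbf{y}$. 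Note also that $\varepsilon/M$ is undefined when $M=0$.
\end{enumerate}
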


\begin{proof}
    Fix $M,\varepsilon$. For any $\mbf{x}=(x_n)_{n\in\N} \in \mathbb{F}$, notice that 
    \[q(\mbf{x})= \bigcap_{n\in\N} q[\pi_n^{-1}(x_n)]\]
    and the sets in the intersection are compact and nested. So, $\textrm{diam}(q[\pi_n^{-1}(x_n)]) \to 0$ and there exists some $n \in \N$ so that for $m \geq n$, $\textrm{diam}(q[\pi_m^{-1}(x_m)])<\frac{\varepsilon}{M}$. Define $U_\mbf{x} = \pi_n^{-1}(x_n)$. Now $\{U_\mbf{x} \ : \ \mbf{x} \in\mathbb{F}\}$ is an open cover of $\mathbb{F}$ so by compactness there is a finite sub-cover of $\mathbb{F}$ of the form:
    \[\pi_{n_1}^{-1}(a_1), \ldots, \pi_{n_k}^{-1}(a_k)\]
    where each $\textrm{diam}(q[\pi_{n_i}^{-1}(a_i)])<\frac{\varepsilon}{M}$. Let $N = \max \{n_1,\ldots, n_k\}$. 
    
    We claim for $b \in A_N$, $\textrm{diam}(q[\pi_N^{-1}(b)])<\frac{\varepsilon}{M}$. To see this, let $b \in A_N$. Take $\mbf{b} \in \mathbb{F}$ with $\pi_N(\mbf{b})=b$. Because the displayed sets above cover, there is $i \leq k$ with $\mbf{b} \in \pi_{n_i}^{-1}(a_i)$. That is, $\pi_{n_i}(\mbf{b})=a_i$. Since $\mbf{b} \in \mathbb{F}$, we have that 
    \[\varphi_{n_i}^N(b)=a_i \implies \pi_N^{-1}(b) \subseteq \pi_{n_i}^{-1}(a_i) \]
    and so: 
    \[q[\pi_N^{-1}(b)] \subseteq q[\pi_{n_i}^{-1}(a_i)]\]
    which implies that $\textrm{diam}(q[\pi_N^{-1}(b)])<\frac{\varepsilon}{M}$. 

    Now the lemma follows by the claim we just proved and Lemma \ref{lem_rel_disj}.
\end{proof}

Finally we make a definition which is a bit technical but necessary for later sections. It assumes a fixed set up of $\mathbb{F} =\varprojlim(A_n,\varphi_n^{n+1})$. Recall the metric $d_{\Epi(B,A)}$ defined on the set $\Epi(B,A)$ of morphisms in $\mathcal{F}$ from $B$ to $A$. We define a metric $d_{\Epi(\mathbb{F},A)}$ of the set $\Epi(\mathbb{F},A)$ for any $A \in \mathcal{F}$ as follows. For $f,g \in \Epi(\mathbb{F},A)$ with $f = f' \circ \pi_n$ and $g=g' \circ \pi_m$ with $f',g' \in \mathcal{F}$, and $n\geq m$, we set:
\[d_{\Epi(\mathbb{F},A)}(f,g) = d_{\Epi(A_n,A)} (f',g' \circ \varphi_m^n) \]

The definition $d_{\Epi(\mathbb{F},A)}$ is well-defined because of the following observation.

\begin{rmk}\label{rmk_dist_cohere}
    For any $A,B,C$, $f,g \in \Epi(B,A)$ and $h \in \Epi(C,B)$, 
    \[d_{\Epi(C,A)} (f \circ h, g\circ h) = d_{\Epi(B,A)} (f,g)\]
\end{rmk}

\subsection{Groups of automorphisms and homeomorphisms}

For any compact metric space $(X,d)$ let $\Homeo(X)$ be the group of homeomorphisms of $X$ with the uniform convergence topology. This is the topology induced by the supremum metric:
\[d_{\sup}(f,g) = \sup_{x \in X} d(f(x),g(x))\]
Note that the supremum metric is \textbf{right-invariant}, i.e. for any $f,g,h$ in $\Homeo(X)$, 
\[d_{\sup}(f h,g h) = d_{\sup}(f,g)\]

\begin{rmk}\label{rmk_vs}
    Let $d$ be a right-invariant metric on a topological group $G$, and $V=B_d(1,\varepsilon)$ the open ball of radius 1 about the identity. Then, for any $S \subseteq G$,
    \[VS= \{g \in G \ : \ \exists s \in S \ d(s,g)<\varepsilon\}\]
\end{rmk}

Let $\Aut(\mathbb{F})$ be the group of automorphisms (isomorphisms) of a projective Fraisse limit $\mathbb{F}$ equipped with the topology it inherits as a subgroup of $\Homeo(\mathbb{F})$. With this topology, $\Aut(\mathbb{F})$ is a non-archimedean Polish group. Suppose that $\mathcal{F}$ is transitive. Then, the quotient map $q:\mathbb{F} \to F$ induces a map 
\[\Phi:\Aut(\mathbb{F}) \to \Homeo(F)\]
given by the formula $\Phi (f)(q(x))=q(f(x))$. This map is well-defined exactly because $f$ is an automorphism (so preserves $R^{\mathbb{F}}$) and it is immediate to check that $\Phi(f)$ is a homeomorphism (see \cite[Lemma 4.5]{irwin_solecki}). Note that the map $\Phi$ is continuous and injective but it is not a topological embedding; in particular, when considering $\Phi[\Aut(\mathbb{F})]$ as a subgroup of $\Homeo(F)$ it is equipped with a different topology than the usual non-archimedean topology on $\Aut(\limF)$.

\section{Topological dynamics}
In this section we collect the facts we need from topological dynamics.
\subsection{Dense subgroups}
Recall a group $G$ is said to be \textbf{extremely amenable} if $\mathcal{M}(G)$ is a singleton.

For $(X,\tau)$ a topological space and $Y \subseteq X$, let $(Y,\tau\restriction_Y)$ be $Y$ with the subspace topology. 

The proposition below collects two well-known facts that we will use.

    \begin{prop}\label{prop_dense_subgp}
        Let $(G,\tau)$ be a topological group and let $H$ be a subgroup of $G$ which is dense in $(G,\tau)$. Then:
        \begin{enumerate}
            \item $(G,\tau)$ is extremely amenable iff $(H,\tau\restriction_H)$ is extremely amenable
            \item $(G,\tau)$ has metrizable universal minimal flow iff $(H,\tau\restriction_H)$ has metrizable universal minimal flow
        \end{enumerate}
    \end{prop}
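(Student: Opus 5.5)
The plan is to compare $G$-flows and $H$-flows directly, using the fact that a continuous action of $H$ on a compact space extends uniquely to a continuous action of $G$. The key tool is the uniform structure. Let $X$ be a compact $H$-flow. For $x \in X$ the orbit map $h \mapsto hx$ is right-uniformly continuous on $H$, and in fact the family of these maps is uniformly equicontinuous. This follows from joint continuity of the action at the identity together with compactness of $X$: for every entourage $U$ of $X$ there is a neighborhood $V$ of $1$ in $H$ with $(vx,x)\in U$ for all $v\in V$ and all $x\in X$. A short compactness argument gives this.

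Since $H$ is dense in $G$, every right-uniformly continuous map from $H$ into a compact (hence complete) space extends uniquely to $G$. We therefore define $gx=\lim_{h\to g} hx$. Continuity, the action axioms, and joint continuity all pass to the limit because of the uniform estimate above. Conversely, every $G$-flow restricts to an $H$-flow. These two operations are mutually inverse on flows and on equivariant maps, since equivariance is a closed condition and $H$ is dense.

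Next I would check that minimality is preserved. Take $x\in X$. The orbit closure $\overline{Gx}$ equals $\overline{Hx}$, because $Gx\subseteq\overline{Hx}$ by continuity of $g\mapsto gx$ and density of $H$. So $X$ is $G$-minimal iff it is $H$-minimal. Consequently the categories of minimal $G$-flows and minimal $H$-flows, with equivariant maps, are isomorphic, and the universal property identifies $\mathcal{M}(G)$ with $\mathcal{M}(H)$ as compact spaces. Both (1) and (2) follow immediately, since being a singleton and being metrizable are properties of the underlying space.

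The main obstacle is the equicontinuity step, which justifies extending the action and proving joint continuity of the extension. I would argue it by contradiction using nets. Suppose there are $v_i\to 1$ and $x_i$ with $(v_ix_i,x_i)\notin U$. Pass to a subnet with $x_i\to x$. Joint continuity at $(1,x)$ then gives $v_ix_i\to x$, which is a contradiction.
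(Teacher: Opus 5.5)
Your proposal is correct, but it takes a different route from the paper. The paper gives no argument of its own: it cites Basso--Zucker (Fact 2.3) for the statement that $\mathcal{M}(H)$ and $\mathcal{M}(G)$ are isomorphic as $H$-flows, and reads (1) and (2) off the underlying compact space.

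What you have written is, in effect, a self-contained proof of that cited fact. You show that every compact $H$-flow extends uniquely to a $G$-flow. The ingredients are:
\begin{enumerate}
\item uniform equicontinuity of the orbit maps, from joint continuity plus compactness;
\item completeness of compact Hausdorff uniform spaces;
\item density of $H$.
\end{enumerate}
Restriction is then an isomorphism between the categories of (minimal) $H$-flows and $G$-flows, and Ellis's uniqueness identifies the universal objects.

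Your approach buys transparency and a slightly stronger conclusion: all flows correspond, not just the universal minimal one. The citation buys brevity.

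When you write it out, spell out three details that your phrase ``pass to the limit'' hides.
\begin{itemize}
\item Take $U$ to be a closed entourage. Then the estimate ``$(h'x,hx)\in U$ whenever $h'h^{-1}\in V$'' for $h,h'\in H$ (with $V$ open) passes to all $g,g'\in G$ with $g'g^{-1}\in V$. This is the uniform estimate you need on $G$.
\item For $g\notin H$, continuity of $x\mapsto gx$ holds because it is a uniform limit of the maps $x\mapsto hx$, $h\to g$.
\item Both joint continuity and the law $g(g'x)=(gg')x$ then follow from this estimate together with continuity of $x\mapsto gx$. Neither argument presupposes the other.
\end{itemize}
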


    \begin{proof}
        For $H$ a dense subgroup of $G$, $\mathcal{M}(H)$ and $\mathcal{M}(G)$ are isomorphic as $H$-flows and in particular are homeomorphic-- see \cite[Fact 2.3]{basso_zucker}.
    \end{proof}

\subsection{Universal minimal flows and syndetic sets}

A subset $S$ of a group $G$ is \textbf{syndetic} if there exists $F \fsubset G$ such that $F S= G$.

The following characterization of extreme amenability is due to Bartosova \cite{bartosova_thesis}:

\begin{thm}[Bartosova]\label{thm_bart_1}
    A topological group is extremely amenable iff for every open neighborhood of the identity $V \subseteq G$, and every $S,T$ open syndetic subsets of $G$, $VS \cap VT \neq\emptyset$.
\end{thm}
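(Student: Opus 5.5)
The plan is to prove the two directions separately. The forward direction uses the greatest ambit (Samuel compactification). The converse uses a nontrivial minimal flow to build two syndetic sets whose $V$-thickenings are disjoint.

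\emph{($\Leftarrow$).} Suppose $G$ is not extremely amenable. Then there is a minimal $G$-flow $X$ with two distinct points $x \neq y$.
\begin{enumerate}
\item Choose open sets $U \ni x$ and $W \ni y$ with $\overline{U} \cap \overline{W} = \emptyset$.
\item Since $\overline{U}$ and $\overline{W}$ are disjoint compact sets and the action $G \times X \to X$ is continuous, a standard compactness argument gives an open neighborhood $V$ of the identity with $V\overline{U} \cap V\overline{W} = \emptyset$. For instance, pick $V_0$ with $V_0^{-1}V_0\overline{U}$ missing $\overline{W}$, and set $V = V_0$.
\item Fix any $z \in X$ and put $S = \{g : gz \in U\}$ and $T = \{g : gz \in W\}$. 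Both are open by continuity of the action.
\item $S$ is syndetic. By minimality every orbit is dense, so the sets $h^{-1}U$ with $h \in G$ cover $X$. Compactness gives $h_1,\dots,h_n$ with $X = \bigcup_i h_i^{-1}U$. Then for every $g$ some $h_i g \in S$, i.e.\ $\{h_1^{-1},\dots,h_n^{-1}\}S = G$. The same argument shows $T$ is syndetic.
\item Finally $VS \subseteq \{g : gz \in VU\}$ and $VT \subseteq \{g : gz \in VW\}$, and these two sets are disjoint. So $VS \cap VT = \emptyset$, contradicting the hypothesis.
\end{enumerate}

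\emph{($\Rightarrow$).} Suppose $G$ is extremely amenable, and let $V$, $S$, $T$ be as in the statement.
\begin{enumerate}
\item Let $\mathsf{S}(G)$ be the greatest ambit: the Samuel compactification of $G$ for the uniformity whose entourages are $\{(g,h) : h \in Vg\}$. For this uniformity the $V$-neighborhood of a set $A$ is exactly $VA$, which matches the convention of Remark \ref{rmk_vs}.
\item Extreme amenability gives a $G$-fixed point $p \in \mathsf{S}(G)$.
\item Since $FS = G$ with $F$ finite, the closures $\overline{fS}$ for $f \in F$ cover $\mathsf{S}(G)$. So $p \in \overline{fS} = f\overline{S}$ for some $f$, and since $p$ is fixed, $p = f^{-1}p \in \overline{S}$. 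Likewise $p \in \overline{T}$.
\item The defining property of the Samuel compactification is that two subsets have intersecting closures iff they are uniformly proximal. Here this means $\overline{S} \cap \overline{T} \neq \emptyset$ iff $V'S \cap V'T \neq \emptyset$ for every identity neighborhood $V'$. Applying this with $V' = V$ gives $VS \cap VT \neq \emptyset$.
\end{enumerate}

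The main obstacle is bookkeeping of sides in the forward direction. The left-translation action of $G$ must be continuous on the compactification built from the uniformity in which neighborhoods of sets look like $VA$. With the "wrong" side conventions one of these fails. If that happens, I would pass to the opposite group, or apply the argument to $g \mapsto g^{-1}$, and check that "syndetic" and "$VS$" transform consistently. Alternatively, I would avoid $\mathsf{S}(G)$ entirely: take the orbit closure of the function $g \mapsto (\mathrm{dist}_V(g,S), \mathrm{dist}_V(g,T))$ in a suitable function space, and find a fixed point there.
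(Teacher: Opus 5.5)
The paper gives no proof of this statement; it quotes it from Bartosova's thesis and uses it as a black box. Your argument is correct and is a complete, self-contained proof.

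The cited source models the greatest ambit by near ultrafilters on $G$, which is a concrete presentation of the same Samuel compactification. Your route is therefore in substance the standard one: a $G$-fixed point of $\mathsf{S}(G)$ lies in the closure of every syndetic set, and two closures meet iff all uniform thickenings meet.

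Your side conventions are the right ones, so the contingency plans in your last paragraph are not needed. A bounded $\phi$ that is uniformly continuous for the entourages $\{(g,vg)\}$ satisfies $\sup_g|\phi(vg)-\phi(g)|\to 0$ as $v\to 1$. This is exactly what joint continuity of the left action on $\mathsf{S}(G)$ requires, and for this uniformity the $V$-neighbourhood of $A$ is $VA$.

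The ($\Leftarrow$) direction, via return-time sets in a nontrivial minimal flow, is fully elementary and needs no compactification.

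Your write-up also shows something the bare citation does not: the ($\Rightarrow$) direction never uses openness of $S,T$, and it actually yields the stronger conclusion $VS\cap T\neq\emptyset$. This bears on the paper's proof of the reverse implication of Theorem \ref{thm_main_ea}. There, the sets from Lemma \ref{lem_ardeg_to_synd} are not known to be open, which is why they are first thickened to $VS_i$. By your argument, one could instead apply the criterion to $S_1,S_2$ directly with $V=\Phi^{-1}(B_{d_{\sup}}(1,\varepsilon/2))$, using right-invariance of $d_{\sup}$.
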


The following characterization of metrizability of universal minimal flows is due to \cite{dis}:

\begin{thm}[Domat-Iyer-Shinko]\label{thm_dis}
    A first-countable topological group $G$ has metrizable universal minimal flow iff for all open neighborhoods of the identity $V \subseteq G$ there exists $k \in \N$ so that for any $S_1,\ldots, S_{k+1}$ open syndetic subsets of $G$ there is $i \neq j$ such that $VS_i \cap VS_j \neq \emptyset$.
\end{thm}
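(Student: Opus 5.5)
The plan is to prove both directions by relating open syndetic subsets of $G$ to open subsets of a minimal flow, via return-time sets. The key correspondence is the following. Let $X$ be a minimal $G$-flow, $x_0\in X$, and $U\subseteq X$ nonempty open. Then
\[S_U=\{g\in G : g x_0\in U\}\]
is open, and it is syndetic by minimality and compactness: finitely many translates $h^{-1}U$ cover $X$. Moreover, if $v g x_0\in vU$, then $VS_U\subseteq\{g : gx_0\in VU\}$. Hence $VS_{U_i}\cap VS_{U_j}\neq\emptyset$ forces $VU_i\cap VU_j\neq\emptyset$. Conversely, an open syndetic $S\subseteq G$ yields an open set in the greatest ambit $S(G)$, and hence in $\mathcal{M}(G)\subseteq S(G)$, by taking the interior of the closure of $S$ in the Samuel compactification. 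A translate of this set meets any minimal subflow.

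For ($\Leftarrow$), fix $V$ and $k=k(V)$, and suppose the condition holds.
\begin{enumerate}
\item Apply the correspondence with $X=\mathcal{M}(G)$. No $k+1$ nonempty open sets $U_1,\dots,U_{k+1}\subseteq \mathcal{M}(G)$ can be pairwise $V$-separated, meaning $VU_i\cap VU_j=\emptyset$ for all $i\neq j$.
\item By a maximality argument, $\mathcal{M}(G)$ is then covered by at most $k$ sets of the form $V^{-1}V\,\overline{U}$ with $U$ small. So for each $V$ the flow is ``$V$-totally bounded''.
\item Run this over a countable neighbourhood base $V_n$, which exists by first countability. This produces a countable family of open sets that should separate points.
\end{enumerate}
This is where I expect the main obstacle. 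The sets $Vx$ do not form a compatible uniformity on an arbitrary compact $G$-space, so bounded covering numbers do not immediately give countable weight. I would handle this by passing to the known structure theory: $\mathcal{M}(G)$ contains a point whose stabiliser $H$ has orbit that is $G_\delta$/comeager, so that $\mathcal{M}(G)\cong\widehat{G/H}$ (Zucker; Ben Yaacov--Melleray--Tsankov). The aim is then to show that the finite $V$-covering bounds make $G/H$ precompact in the right uniformity, with a countable base coming from the $V_n$.

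For ($\Rightarrow$), assume $\mathcal{M}(G)$ is metrizable. By the results just cited, $\mathcal{M}(G)=\widehat{G/H}$ with $H$ closed and extremely amenable.
\begin{enumerate}
\item Given $V$, let $k$ be the number of $W$-translates needed to cover $\widehat{G/H}$, for a suitable smaller neighbourhood $W$ with $W^{-1}W\subseteq V$.
\item Given open syndetic sets $S_1,\dots,S_{k+1}$, push each into $\mathcal{M}(G)$ via the correspondence: take its closure in $S(G)$, translate it to meet the minimal subflow, and note that each such image has nonempty interior in $\mathcal{M}(G)$.
\item Pigeonhole two of the images into one $W$-ball. Then use extreme amenability of $H$, via Theorem \ref{thm_bart_1} applied inside the fibres, to pull the overlap back to a common point of $VS_i\cap VS_j$.
\end{enumerate}
The delicate point in this direction is tracking the translates chosen in step 2. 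Since $\mathcal{M}(G)$ is only an $H$-quotient of $S(G)$, I would fix the translates consistently using the coset structure $G/H$.
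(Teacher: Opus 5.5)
The paper does not prove this theorem; it is quoted from Domat--Iyer--Shinko. So your argument has to stand on its own, and it has a genuine gap in ($\Leftarrow$).

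\textbf{The gap in ($\Leftarrow$).} Your return-time correspondence and Steps 1--2 are correct. Step 3, however, is not supplied, and the proposed repair is circular. The representation $\mathcal{M}(G)\cong\widehat{G/H}$, with $H$ a co-precompact stabiliser of a point with comeager orbit, is a \emph{consequence} of metrizability of $\mathcal{M}(G)$ (Ben Yaacov--Melleray--Tsankov and Zucker, for Polish groups). It fails in general. For $G=\mathbb{Z}$ it would give $\widehat{G/H}=G/H$ finite, yet $\mathcal{M}(\mathbb{Z})$ maps onto an irrational rotation.

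You can see that this theorem would be doing all the work: once it is granted, your hypothesis is never used. Compactness of $\widehat{G/H}$ already makes $G/H$ precompact, and the countable base $(V_n)$ already makes it metrizable. What is missing is a derivation, from the bounded $V$-separation numbers alone, of such a point. For example, you would need some $x_0$ with $\operatorname{int}\overline{V_nx_0}\neq\emptyset$ for all $n$, co-precompact stabiliser, and $\mathcal{M}(G)\cong\widehat{G/G_{x_0}}$.

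The obvious Baire argument does not provide this. Suppose $U$ lies in a pairwise $V$-separated family of maximum size. Then $VA\cap VB\neq\emptyset$ for all nonempty open $A,B\subseteq U$. So, with $V$ symmetric, each $V^{2}B$ is dense open in $U$, and $\{x\in U:\overline{V^{2}x}\supseteq U\}=U\cap\bigcap_{B}V^{2}B$. But $B$ must range over a base of $U$, and you cannot take that base countable without already knowing that $\mathcal{M}(G)$ is metrizable. This step is the heart of the theorem, and your proposal does not contain it.

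\textbf{The ($\Rightarrow$) direction.} This direction is repairable, though not as in your Step 3, and the worry about translates is unfounded. If $KS=G$ with $K$ finite, then $K\overline{S}=S(G)$. So for any $x_0$ in the minimal subflow $M\subseteq S(G)$, some $g\in K^{-1}$ has $gx_0\in\overline{S}$. No coherent choice of translates is needed, because $M$ is $G$-invariant.

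Here is how to finish:
\begin{enumerate}
\item Take $x_0\in M$ with co-precompact stabiliser $H$, a symmetric $W$ with $W^{4}\subseteq V$, a finite $F$ with $G=WFH$, and set $k=|F|$.
\item Given $S_1,\dots,S_{k+1}$, choose $g_i$ with $g_ix_0\in\overline{S_i}$ and write $g_i\in Wf_iH$.
\item By pigeonhole, $f_i=f_j$ for some $i\neq j$. Then $g_jx_0\in W^{2}g_ix_0\subseteq\overline{W^{2}S_i}$, so $\overline{W^{2}S_i}\cap\overline{W^{2}S_j}\neq\emptyset$.
\item Use the standard fact that if $W'A\cap W'B=\emptyset$ for some neighbourhood $W'$ of $1$, then $\overline{A}\cap\overline{B}=\emptyset$ in $S(G)$. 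This yields $W^{4}S_i\cap W^{4}S_j\neq\emptyset$, hence $VS_i\cap VS_j\neq\emptyset$.
\end{enumerate}
Extreme amenability of $H$ and Theorem~\ref{thm_bart_1} play no role; only co-precompactness of $H$ is used.

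Also, the structure theorem you invoke is proved for Polish groups. For a merely first-countable $G$ you need a version valid beyond the Polish setting (cf.\ Basso--Zucker).
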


\section{Combinatorics on the group}
Below we connect the combinatorics of the Fraisse category to the group $\Aut(\limF)$. The following two lemmas are technical to state but are the key for the proofs of the main theorems in Section \ref{sec_main_thms}. To parse the statements of the lemma, recall the set-up from Section \ref{subsec_ntn_limit}. Let $\mathcal{F}$ be a transitive projective Fraisse category with limit $\limF$. Let $q:\limF \to F=\limF/ R^{\limF}$ be the quotient map. Let $\Aut(\limF)$ be the group of automorphisms of $\limF$ and let $\Phi:\Aut(\limF) \to \Homeo(F)$ be the map induced by the quotient map $q$. Let $d_F$ be a metric inducing the topology on $F$ and $d_{\sup}$ the supremum metric on $\Homeo(F)$. 

In the proof of the lemmas below we use the notation that for $k \in \N$, $[k]=\{1,2,\ldots,k\}$.

\begin{lem}\label{lem_ardeg_synd_overlap}
    Suppose that $\limF=\varprojlim(A_n,\varphi_n^{n+1})$ and that $\rdeg(A_m)\leq k$ in $\mathcal{F}$. Then, for any $\varepsilon >0$ satisfying the condition below:
    \[\textrm{for all } \mbf{x},\mbf{y} \in \limF, \ d_{A_m}(\pi_m(\mbf{x}),\pi_m(\mbf{y}))\leq 2 \implies d_F(q(\mbf{x},\mbf{y}))<\varepsilon\]
    we have that for any $S_1,\ldots, S_{k+1}$ syndetic subsets of $\Aut(\limF)$, there is $i \neq j$ and $s_i \in S_i, s_j \in S_j$ such that $d_{\sup}(\Phi(s_i),\Phi(s_j))<\varepsilon$. 
\end{lem}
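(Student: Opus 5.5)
The plan is to translate syndeticity in $\Aut(\limF)$ into a finite colouring of $\Epi(A_N,A_m)$, apply the degree bound $\rdeg(A_m)\le k$, and finish with a pigeonhole over the $k+1$ sets.

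\textbf{Key reduction.} For $g,h\in\Aut(\limF)$, if $d_{\Epi(\limF,A_m)}(\pi_m\circ g,\pi_m\circ h)\le 2$, then $d_{\sup}(\Phi(g),\Phi(h))<\varepsilon$. Indeed, for every $\mbf{x}$ we have $d_{A_m}(\pi_m(g\mbf{x}),\pi_m(h\mbf{x}))\le 2$. The hypothesis on $\varepsilon$ then gives $d_F(q(g\mbf{x}),q(h\mbf{x}))<\varepsilon$, and the supremum is attained by compactness, so it is strictly below $\varepsilon$. It therefore suffices to produce $s_i\in S_i$ and $s_j\in S_j$ with $i\neq j$ whose compositions with $\pi_m$ are within graph distance $2$. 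Two elements each lying in the $1$-neighbourhood of a common colour class would give exactly such a pair, once the colour classes are arranged to consist of maps $\pi_m\circ s$ with $s\in S_i$.

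\textbf{Setup.} Fix finite sets $F_1,\dots,F_{k+1}$ with $F_iS_i=\Aut(\limF)$, and put $E=\bigcup_i F_i$. Since each $f\in E$ is an automorphism, $\pi_m\circ f^{-1}$ factors as $u_f\circ\pi_n$ with $u_f\in\Epi(A_n,A_m)$, for one $n$ large enough to serve all $f\in E$. Let $B=A_n$, and let $d$ be the number of colours below, which is bounded in terms of $\prod_i|F_i|$. Take $C$ witnessing $\rdeg(A_m)\le k$ for $B,d$. By the extension property and Remark~\ref{rmk_arp_extend} we may assume $C=A_N$ for some $N\ge n$.

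\textbf{Colouring.} For $e\in\Epi(A_N,A_m)$, Lemma~\ref{lem_ext_to_aut} gives $g_e\in\Aut(\limF)$ with $\pi_m g_e=e\circ\pi_N$. By syndeticity choose, for each $i$, some $c_i(e)\in F_i$ with $c_i(e)^{-1}g_e\in S_i$. Colour $e$ by the tuple $\chi(e)=(c_1(e),\dots,c_{k+1}(e))$. The degree bound then yields $p\in\Epi(A_N,A_n)$ and a set $I$ of at most $k$ colours with $\Epi(A_n,A_m)\circ p\subseteq[\chi^{-1}(I)]_1$. Let $h$ be an automorphism lifting $p$, i.e.\ with $\pi_n h=p\circ\pi_N$. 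Then for every $f\in E$ we get $\pi_m f^{-1}h=u_f\circ p\circ\pi_N$, which lies in $\Epi(A_n,A_m)\circ p$. So each shifted map $u_f\circ p$ lies within distance $1$ of some $e$ whose colour is in $I$, and the maps $\pi_m\circ c_i(e)^{-1}g_e$, for the various $i$, all come from elements of the $S_i$.

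\textbf{Main obstacle: the final pigeonhole.} One must choose $k+1$ test maps in $\Epi(A_n,A_m)\circ p$, one for each index $i$, so that the colour attached to test map $i$ encodes a genuine element of $S_i$ whose projection is $1$-close to that test map. Since at most $k$ colours occur among $k+1$ indices, two indices $i\neq j$ must then be served by the same colour class. The triangle inequality gives distance $\le 2$ between the resulting $\pi_m\circ s_i$ and $\pi_m\circ s_j$, possibly after adjusting by a common right translate, which is harmless since $d_{\sup}$ is right-invariant by Remark~\ref{rmk_vs}. My first attempt is to take as test maps $u_f\circ p$ with $f$ ranging over a single fixed choice $f_i\in F_i$ coming from writing $h=f_is_i$. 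I would then check carefully, using Remark~\ref{rmk_dist_cohere}, that composing with $p$ and with the lifts preserves the distances. If the tuple colouring makes this bookkeeping awkward, I would fall back on colouring by a single index-dependent coordinate.
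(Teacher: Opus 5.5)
Your proposal has a genuine gap at the central step, which you yourself leave as the unresolved ``main obstacle''. Your key reduction, the setup, and the computation $\pi_m f^{-1}h=u_f\circ p\circ\pi_N$ are all sound. Writing $h=f_is_i$, the last of these shows that $u_{f_i}\circ p$ is the level-$N$ factor of $\pi_m\circ s_i$ with $s_i=f_i^{-1}h\in S_i$. So every copy $\Epi(A_n,A_m)\circ p$ contains the projection of an element of each $S_i$, which is exactly the paper's Claim 2.

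The problem is the colouring. Your key reduction says colour classes should consist of maps near $\pi_m\circ s$ with $s\in S_i$, but the tuple colour $\chi(e)=(c_1(e),\dots,c_{k+1}(e))$ does not have this property. The element of $S_i$ it certifies is $c_i(e)^{-1}g_e$, whose projection is $u_{c_i(e)}\circ\pi_n\circ g_e$. That depends on $\pi_n\circ g_e$, which $e$ does not determine: only $\pi_m\circ g_e=e\circ\pi_N$ is fixed. So this projection need not be near $e$ or near any test map.

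Consequently, knowing that $t_i,t_j$ are $1$-close to maps $e_i,e_j$ with $\chi(e_i)=\chi(e_j)$ says nothing about $d(t_i,t_j)$. Even when $e_i=e_j=e$, right-invariance gives $d_{\sup}(\Phi(c_i^{-1}g_e),\Phi(c_j^{-1}g_e))=d_{\sup}(\Phi(c_i^{-1}),\Phi(c_j^{-1}))$. This is a fixed quantity determined by the translating sets and is typically not below $\varepsilon$, and no common right translate changes it. The pigeonhole ``two indices share a colour'' therefore yields no metric information.

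The missing idea is to colour by proximity to the projected sets themselves. Put $S_i^0=\{t\in\Epi(A_N,A_m) : \exists s\in S_i,\ \pi_m\circ s=t\circ\pi_N\}$ and $S_i^1=[S_i^0]_1$. Colour $e$ by $i$ if $e\in S_i^1$, and by $k+2$ if $e$ lies in none of these sets. If $e$ lies in two of them, you are already done by the triangle inequality and your key reduction. Only $k+2$ colours are needed, independent of the $F_i$.

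The pigeonhole is then not that two indices share a colour. Rather, since $|I|\le k$, some $j\in\{1,\dots,k+1\}$ is missing from $I$. The map $u_{f_j}\circ p\in S_j^0$ is $1$-close to some $e$ with $\chi(e)=i\in I$, so $i\neq j$. Also $e\in S_j^1$, which rules out $i=k+2$. Hence $e\in S_i^1\cap S_j^1$, which produces $s_i\in S_i$ and $s_j\in S_j$ whose level-$N$ projections are within distance $2$. Your reduction then finishes the proof, and this is the paper's argument.
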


\begin{proof}
    Assume $A_m$ has Approximate Ramsey degree $\leq k$ in $\mathcal{F}$ and let $\varepsilon >0$ be such that for all  $\mbf{x},\mbf{y} \in \limF$,
    \begin{equation}\label{eqn_4_1}
     \ d_{A_m}(\pi_m(\mbf{x}),\pi_m(\mbf{y}))\leq 2 \implies d_F(q(\mbf{x},\mbf{y}))<\varepsilon
    \end{equation}

    Let $S_1,\dots, S_{k+1}$ be syndetic subsets of $\Aut(\limF)$. 

    Let $K\fsubset \Aut(\limF)$ such that $K^{-1}S_i=\Aut(\limF)$ for all $i=1,\ldots, k+1$. Here $K^{-1}=\{f^{-1} \ : \ f\in K\}$. By Proposition \ref{prop_auts_are_diag}, let $n>m$ such that for every $f \in K$ there exists $f_m^n \in \Epi(A_n,A_m)$ such that $\pi_m\circ f=f_m^n\circ \pi_n$. 

    Since $\rdeg(A_m)\leq k$, there exists $A_N$ with $N > n$ such that $A_N$ witnesses that $\rdeg(A_m) \leq k$ for $A_n,k+2$. That is, for any $\chi:\Epi(A_N,A_m) \to [k+2]$ there is $f\in \Epi(A_N,A_n)$ and $I \subset [k+2]$ with $\norm{I} \leq k$ such that $\Epi(A_n,A_m) \circ f \subseteq [\chi^{-1}(I)]_1$. The fact that the witness can be chosen to be some $A_N$ is by Remark \ref{rmk_arp_extend} and Condition (i) from Theorem \ref{thm_irw_sol}. 

    For $i=1,\ldots, k+1$, define $S_i^0 \subset \Epi(A_N,A_m)$ by:
    \[S_i^0=\{f \ : \ \exists s \in S_i \textrm{ s.t. }\pi_m \circ s=f \circ \pi_N\}\]
    Define $S_i^1 \subset \Epi(A_N,A_m)$ by:
    \[S_i^1=[S_i^0]_1= \{f  \ : \ \exists g \in S_i^0 \textrm{ s.t. }d_{\Epi(A_N,A_m)}(f,g)\leq 1\}\]

    We have two claims:

    \begin{claim}\label{claim_4_1}
        If $S_i^1 \cap S_j^1 \neq \emptyset$, then there exists $s_i \in S_i$ and $s_j \in S_j$ such that $d_{\sup}(\Phi(s_i),\Phi(s_j))<\varepsilon$.
    \end{claim}

    \begin{proof}[Proof of Claim \ref{claim_4_1}]
        Let $f \in S_i^1 \cap S_j^1$. Then, there is $g_i \in S_i^0$ and $g_j \in S_j^0$ with $d_{\Epi(A_N,A_m)}(f,g_i) \leq 1$ and $d_{\Epi(A_N,A_m)}(f,g_j) \leq 1$. Thus:
        \begin{equation}\label{eqn_4_4}
            d_{\Epi(A_N,A_m)}(g_i,g_j) \leq 2
        \end{equation}
        By definition of $S_i^0$, let $s_i \in S_i$ such that 
        \begin{equation}\label{eqn_4_2}
            \pi_m \circ s_i = g_i \circ \pi_N 
        \end{equation}
        and let $s_j \in S_j$ such that 
        \begin{equation}\label{eqn_4_3}
            \pi_m \circ s_j =g_j \circ \pi_N
        \end{equation}

        We will show that $d_{\sup}(\Phi(s_i),\Phi(s_j))<\varepsilon$. Let $x\in F$. Let $\mbf{x} \in \limF$ with $x=q(\mbf{x})$. We need to show: $d_F(\Phi(s_i)(x),\Phi(s_j)(x))<\varepsilon$ which by definition of the map $\Phi$ means we need to show:
        \[
        d_F(q(s_i(\mbf{x})),q(s_j(\mbf{x})))<\varepsilon
       \]
       We have:
       \begin{align*}
           d_{A_m}(\pi_m(s_i(\mbf{x})),\pi(s_j(\mbf{x}))) &= d_{A_m}(g_i(\pi_N(\mbf{x})),g_j(\pi_N(\mbf{x}))) & \textrm{ by Eqns \ref{eqn_4_2},\ref{eqn_4_3}}\\
           &\leq 2 & \textrm{by Eqn \ref{eqn_4_4}}\\
       \end{align*}
       So by Formula \ref{eqn_4_1}, $d_F(q(s_i(\mbf{x})),q(s_j(\mbf{x})))<\varepsilon$ and this completes the proof of the claim.
    \end{proof}

    \begin{claim}\label{claim_4_2}
        For any $g \in \Epi(A_N,A_n)$ and $i \in [k+1]$,
        \[\Epi(A_n,A_m) \circ g \cap S_i^0\neq \emptyset\]
    \end{claim}

    \begin{proof}[Proof of Claim \ref{claim_4_2}]
        Let $g \in \Epi(A_N,A_n)$ and $i\in [k+1]$. By Lemma \ref{lem_ext_to_aut}, let $h \in \Aut(\limF)$ such that $\pi_n\circ h=g \circ \pi_N$. Since $K^{-1}S_i=\Aut(\limF)$, there exists $f \in K$ such that $fh \in S_i$. By choice of $n$, there is $f_m^n \in \Epi(A_n,A_m)$ such that $\pi_m \circ f=f_m^n \circ \pi_n$. Now we compute that:
        \[\pi_m \circ f\circ h= f_m^n \circ \pi_n \circ h = f_m^n \circ g \circ \pi_N\]
        The fact that $fh \in S_i$ and the equation above imply that $f_m^n \circ g \in S_i^0$. So $\Epi(A_n,A_m) \circ g \cap S_i^0 \neq \emptyset$ which finishes the proof of the claim.
    \end{proof}

    We will show that there is $i \neq j$ such that $S_i^1 \cap S_j^1 \neq \emptyset$. Define $\chi:\Epi(A_N,A_m) \to [k+2]$ by:
    \[\chi(f) =\begin{cases}
        i & \textrm{ if } f \in S_i^1\\
        k+2 & \textrm{ if } f \notin \bigcup_{i=1}^{k+1}S_i^1 \\
    \end{cases}\]
    If the coloring function $\chi$ above is not well-defined, this means there is $f \in S_i^1 \cap S_j^1$ for some $i\neq j$ and we are done.  

    By the choice of $A_N$, let $g \in \Epi(A_N,A_n)$ and $I \subseteq [k+2]$ with $\norm{I} \leq k$ such that:
    
    \begin{equation}\label{eqn_5_1}
    \Epi(A_n,A_m) \circ g \subseteq [\chi^{-1}(I)]_1
    \end{equation}
    
    Since $\norm{I} \leq k$, there exists $j \in [k+1]$ such that $j \notin I$.

    By Claim \ref{claim_4_2}, let $s \in \Epi(A_n,A_m) \circ g \cap S_j^0$. By Equation \ref{eqn_5_1}, let $i \in I$ and $f \in \Epi(A_N,A_m)$ such that $\chi(f)=i$ and $d_{\Epi(A_N,A_m)}(f,s)\leq 1$. Note that $i \in I \implies i \neq j$. Since $s \in S_j^0$ and $d_{\Epi(A_N,A_m)}(f,s)\leq 1$, $f \in S_j^1$. So by definition of $\chi$, we have that $\chi(f) \neq k+2$. That is, $i \neq k+2$. But, now $\chi(f)=i$ means $f \in S_i^1$. So $f \in S_i^1 \cap S_j^1$.

    So there is $i \neq j$ such that $S_i^1 \cap S_j^1 \neq \emptyset$ and now by Claim \ref{claim_4_1}, we are done.
\end{proof}

\begin{lem}\label{lem_ardeg_to_synd}
    Suppose that $\limF=\varprojlim(A_n,\varphi_n^{n+1})$ and that $A_m$ has Approximate Ramsey degree $\geq k$ in $\mathcal{F}$. Then, for any $\varepsilon >0$ satisfying the condition below:
    \[\textrm{for all } \mbf{x},\mbf{y} \in \limF, \ \neg (\pi_m(\mbf{x})R^{A_m}\pi_m(\mbf{x})) \implies d_F(q(\mbf{x}),q(\mbf{y}))\geq \varepsilon \]
    we have that there exists $S_1,\ldots, S_k$ syndetic subsets of $\Aut(\limF)$ such that for any $i\neq j$, $s_i \in S_i$, and $s_j \in S_j$, we have $d_{\sup}(\Phi(s_i),\Phi(s_j))\geq \varepsilon$.
\end{lem}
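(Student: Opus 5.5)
The plan is to pass from the combinatorial failure of $\rdeg(A_m)\leq k-1$ to a single coloring of $\Epi(\limF,A_m)$. From it we read off $k$ syndetic subsets of $\Aut(\limF)$ that are pairwise ``$2$-separated'' at level $m$. The hypothesis on $\varepsilon$ then turns that separation into $d_{\sup}$-separation. This reverses the argument of Lemma \ref{lem_ardeg_synd_overlap}.

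\emph{Step 1 (reducing to a level $A_n$).} Since $\rdeg(A_m)\not\leq k-1$, there are $B\in\mathcal{F}$ and $d\in\N$ such that no $C$ witnesses $\rdeg(A_m)\leq k-1$ for $B,d$. By Theorem \ref{thm_irw_sol}(i), choose $n>m$ and $p\in\Epi(A_n,B)$. The same failure then holds with $A_n$ in place of $B$. Indeed, if $f\in\Epi(C,A_n)$ satisfied $\Epi(A_n,A_m)\circ f\subseteq[\chi^{-1}(I)]_1$, then, since $\Epi(B,A_m)\circ p\subseteq\Epi(A_n,A_m)$, the map $p\circ f$ would be good for $B$.

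So for every $N\geq n$ there is a ``bad'' coloring $\chi_N:\Epi(A_N,A_m)\to[d]$. Fix a nonprincipal ultrafilter $\mathcal U$ on $\N$. For $g=g'\circ\pi_M\in\Epi(\limF,A_m)$, set
\[c(g)=\lim_{\mathcal U} \chi_N(g'\circ\varphi_M^N).\]
This is well defined by Remark \ref{rmk_dist_cohere}.

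\emph{Claim:} for every $f\in\Epi(\limF,A_n)$ and every $I\subseteq[d]$ with $\norm{I}\leq k-1$, we have $\Epi(A_n,A_m)\circ f\not\subseteq[c^{-1}(I)]_1$. To see this, suppose the inclusion held. Only finitely many elements are involved: the set $\Epi(A_n,A_m)\circ f$ and one chosen nearby witness for each of its elements. All of these factor through some level $M'$. For $\mathcal U$-many $N\geq M'$, the coloring $\chi_N$ agrees with $c$ on all of them. That would make $f'\circ\varphi_{M}^{N}$ good for $\chi_N$, a contradiction.

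\emph{Step 2 (syndetic sets from colors).} Color $s\in\Aut(\limF)$ by $c(\pi_m\circ s)$. For $J\subseteq[d]$, say $J$ is \emph{unavoidable} if for every $g\in\Epi(\limF,A_n)$ the set $\Epi(A_n,A_m)\circ g$ meets $[c^{-1}(J)]_1$. Then the argument of Claim \ref{claim_4_2} runs backwards using Lemma \ref{lem_ext_to_aut} and Proposition \ref{prop_auts_are_diag}. The finite set $K\subseteq\Aut(\limF)$ is obtained by lifting each element of the finite set $\Epi(A_n,A_m)$ to an automorphism. It shows that $\{s:\pi_m\circ s\in[c^{-1}(J)]_1\}$, or a suitably thinned version of it, is syndetic when $J$ is unavoidable.

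The Claim says that no set of size $\leq k-1$ is ``covering''. Using this, I would select $k$ subsets $J_1,\ldots,J_k$ and corresponding sets
\[S_i=\{s:\ \pi_m\circ s\in c^{-1}(J_i)\ \text{and}\ \pi_m\circ s\notin[c^{-1}(J_{i'})]_1\ \text{for } i'\neq i\}.\]
These should be syndetic and have the following property: for $s_i\in S_i$ and $s_j\in S_j$ with $i\neq j$, there is $\mbf{x}$ with $d_{A_m}(\pi_m s_i(\mbf{x}),\pi_m s_j(\mbf{x}))\geq2$. In particular $\neg\bigl(\pi_m s_i(\mbf{x})\,R^{A_m}\,\pi_m s_j(\mbf{x})\bigr)$. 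The hypothesis on $\varepsilon$ then gives
\[d_F\bigl(\Phi(s_i)(q\mbf{x}),\Phi(s_j)(q\mbf{x})\bigr)\geq\varepsilon,\]
hence $d_{\sup}(\Phi(s_i),\Phi(s_j))\geq\varepsilon$.

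\emph{Main obstacle.} The hardest step is choosing $J_1,\ldots,J_k$. I would first try a minimality argument: pick a minimal unavoidable family, in the spirit of the proofs in \cite{zucker_umf} and the $k$-versus-$(k-1)$ pigeonhole of Lemma \ref{lem_ardeg_synd_overlap}. Possibly I would replace $c$ by a coarsened coloring of $\Epi(\limF,A_m)$ recording which $1$-neighbourhoods meet which colors, so that distinct classes become $2$-separated. The Claim then guarantees that at least $k$ classes survive as syndetic, because removing all but $k-1$ of them would yield a set $I$ with $\norm{I}\leq k-1$ and $\Epi(A_n,A_m)\circ f\subseteq[c^{-1}(I)]_1$.
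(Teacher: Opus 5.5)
Your Step~1 is fine. The ultrafilter limit is a legitimate substitute for the paper's K\"onig's-lemma construction of a coherent sequence of anti-Ramsey colorings. Your finite-support argument correctly shows that $c$ itself is anti-Ramsey on $\Epi(\limF,A_m)$.

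The gap is Step~2, which is the actual content of the lemma. You never fix $J_1,\ldots,J_k$ or prove your $S_i$ syndetic, and for $d>k$ the route you sketch cannot work. Apply your Claim to $I=\bigcup_{i'\neq i}J_{i'}$ (this needs $\norm{I}\leq k-1$). It gives, in every copy $\Epi(A_n,A_m)\circ g$, an element $h$ whose $1$-ball misses $c^{-1}(I)$. But such an $h$ equals $\pi_m\circ s$ for some $s\in S_i$ only if also $c(h)\in J_i$, and the Claim guarantees that only when every color outside $I$ lies in $J_i$. In that case the sets $[d]\setminus\bigcup_{i'\neq i}J_{i'}$ are pairwise disjoint, each of size $\geq d-k+1$, so $k(d-k+1)\leq d$. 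For $k\geq 2$ this forces $d\leq k$, hence $d=k$ (bad colorings need at least $k$ colors) and all $J_i$ are singletons.

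Your fallback ideas do not close this gap:
\begin{itemize}
\item ``Unavoidable'' in your sense (meeting $[c^{-1}(J)]_1$) controls sets \emph{near} a color, whereas separation needs an \emph{interior} condition.
\item The coarsened coloring has classes that are not $2$-separated, since morphisms at distance $\leq 1$ can see different color sets in their $1$-balls.
\item Non-syndeticity of several classes is witnessed on different copies, so it does not combine into a single $f$ and a single $I$.
\end{itemize}

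The missing idea is to work with exactly $k$ colors, which is what the paper does: its proof takes the bad colorings to map into $[k]$ from the outset. The choice is then forced:
\[S_i=\{s\in\Aut(\limF)\ :\ c(h)=i \textrm{ for all } h \textrm{ with } d_{\Epi(\limF,A_m)}(h,\pi_m\circ s)\leq 1\}.\]
Separation is automatic: if $d_{\Epi(\limF,A_m)}(\pi_m\circ s_i,\pi_m\circ s_j)\leq 1$, then $\pi_m\circ s_j$ would have color both $i$ and $j$. Syndeticity is exactly your Claim with $I=[k]\setminus\{i\}$. For $t\in\Aut(\limF)$, apply the Claim to $\pi_n\circ t$ to get $e\in\Epi(A_n,A_m)$ with $e\circ\pi_n\circ t\notin[c^{-1}(I)]_1$. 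If $\kappa_e\in K$ lifts $e$ via Lemma~\ref{lem_ext_to_aut}, then $\kappa_e t\in S_i$. With your ultrafilter coloring this is even shorter than the paper's pigeonhole over levels.

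If you keep an arbitrary $d$ (which the definition of approximate Ramsey degree does allow), you owe a reduction from $d$ to $k$ colors. That is not a formality here. Merging colors via $\mu:[d]\to[k]$ turns $I$ into $\mu^{-1}(I)$, which can have more than $k-1$ elements, so badness is not inherited.
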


\begin{proof}
    Let $B \in \mathcal{F}$ witness the fact that $A_m$ has Approximate Ramsey degree $\geq k$. That is, for every $C\in \mathcal{F}$ with $\Epi(C,B)\neq \emptyset$ we have a coloring $\chi: \Epi(C,A_m)\to [k]$ such that for any $f \in \Epi(C,B)$ and any $I\subset [k]$ with $\norm{I}=k-1$, $\Epi(B,A_m) \circ f\not\subseteq [\chi^{-1}(I)]_1$. There is no harm in replacing $B$ by any $B' \in \mathcal{F}$ such that $\Epi(B',B) \neq \emptyset$. So we may assume that $B=A_n$ for some $n\geq m$. 

    For any $C \in \mathcal{F}$ say that coloring $\chi:\Epi(C,A_m)\to [k]$ is \emph{anti-Ramsey} if for any $f \in \Epi(C,A_n)$ and $I\subset [k]$ with $\norm{I}=k-1$, $\Epi(A_n,A_m)\circ f\not\subseteq [\chi^{-1}(I)]_1$. 

    Let $l_1,l_2$ be such that $n\leq l_1<l_2$. We say that a coloring $\chi_1:\Epi(A_{l_1}, A_m) \to [k]$ is the \emph{restriction} of a coloring $\chi_2:\Epi(A_{l_2},A_m) \to [k]$ if for any $g \in \Epi(A_{l_1},A_m)$ we have that $\chi_1(g)= \chi_2(g \circ \varphi_{l_1}^{l_2})$. We write $\chi_1 \preceq \chi_2$ if $\chi_1$ is the restriction of $\chi_2$. The claim below follows directly from the definitions and so we omit the proof.

    \begin{claim}\label{claim_lem_1_1}
        Let $n \leq l_1<l_2$. If $\chi_2:\Epi(A_{l_2},A_m) \to [k]$ is anti-Ramsey and $\chi_1:\Epi(A_{l_1},A_m) \to [k]$ is such that $\chi_1 \preceq \chi_2$, then $\chi_1$ is anti-Ramsey.
    \end{claim}

Let $C=\{\chi \ : \ l \geq n, \chi:\Epi(A_l,A_m) \to [k] \textrm{ is anti-Ramsey}\}$. By Claim \ref{claim_lem_1_1} and the fact that there exists an anti-Ramsey coloring $\Epi(A_l,A_m) \to [k]$ for all $l \geq n$, we have that $(C,\preceq)$ is a tree of infinite height. It is finitely branching because for any $l$ the set of function $\Epi(A_l,A_m) \to [k]$ is finite. By Koenig's lemma, let $chi_l:\Epi(A_l,A_m) \to [k]$ for $l\geq n$ such that:
\begin{enumerate}[(i)]
    \item each $\chi_l$ is anti-Ramsey
    \item for $l_1 <l_2$, $\chi_{l_1}\preceq \chi_{l_2}$
\end{enumerate}

We define a coloring $\Delta:\Epi(\limF,A_m) \to [k]$ as follows. For any $f \in \Epi(\limF,A_m)$, there is $l$ (which we can assume is at least $n$) and $f' \in \Epi(A_l,A_m)$ so that $f=f' \circ \pi_l$. Set $\Delta(f)=\chi_l(f')$. This coloring is well-defined by condition (ii) above. 

For $i \in [k]$, define $S_i \subset \Aut(\limF)$ as follows:
\[S_i=\{f  \ : \ \forall h \in \Epi(\limF,A_m) \textrm{ such that }d_{\Epi(\limF,A_m)}(h,\pi_m\circ f)\leq 1, \Delta(h)=i\}\]
Note that if $f \in S_i$, $\Delta(\pi_m\circ f)=i$.

\begin{claim}\label{claim_lem_1_2}
    Each $S_i$ is syndetic.
\end{claim}

\begin{proof}[Proof of Claim \ref{claim_lem_1_2}]
     By Lemma \ref{lem_ext_to_aut}, fix $K \subseteq \Aut(\limF)$ finite such that for every $f \in \Epi(A_n,A_m)$ there is $k \in K$ so that $\pi_m \circ k=f \circ \pi_n$. We claim that $K^{-1}S_i=\Aut(\limF)$. 
         
         Let $g \in \Aut(\limF)$. We will find $k \in K$ such that $kg \in S_i$. Let $p\geq n$ and $g_n^{p}:A_{p} \to A_n$ such that $\pi_n \circ g= g_n^{p} \circ \pi_{p}$. 
         
         By condition (i) above, for each $l \geq p$, $\Epi(A_n,A_m) \circ g_n^p\circ \varphi_{p}^l \not\subseteq [\chi^{-1}([k]\setminus\{i\})]_1$. Thus, for each $l\geq p$, there exists $f_l \in \Epi(A_n,A_m)$ such that for all $h \in \Epi(A_l,A_m)$ with $d_{\Epi(A_l,A_m)}(h,f_l \circ g_n^p \circ \varphi_p^l) \leq 1$, $\chi_l(h)=i$. Since $\Epi(A_n,A_m)$ is finite, there exists $f \in \Epi(A_n,A_m)$ such that for any $l \geq p$ there exists $r \geq l$ such that 
         \begin{equation}\label{eqn_2_1}
         \textrm{for all } h \in \Epi(A_r,A_m) \textrm{ with } d_{\Epi(A_r,A_m)}(h, f \circ g_n^p \circ \varphi_p^r)\leq 1, \chi_r(h)=i
         \end{equation} 
         In fact we claim that for any $l \geq p$, for all $h \in \Epi(A_l,A_m)$ with $d_{\Epi(A_l,A_m)}(h, f\circ g_n^p \circ \varphi_p^l)\leq 1$, $\chi_l(h)=i$. To see this, fix $l \geq p$ and take $r\geq l$ so that Formula \ref{eqn_2_1} holds. Suppose let $h \in \Epi(A_l,A_m)$ with $d_{\Epi(A_l,A_m)}(h, f\circ g_n^p \circ \varphi_p^l)\leq 1$. Then by Remark \ref{rmk_dist_cohere}, we have $d_{\Epi(A_r,A_m)}(h \circ \varphi_l^r, f\circ g_n^p \circ \varphi_p^l \circ \varphi_l^r) \leq 1$. So by Formula \ref{eqn_2_1}, $\chi_r(h \circ \varphi_l^r)=i$. Since by condition (ii), $\chi_l \preceq \chi_r$, we have that $\chi_l(h) = \chi_r(h \circ \varphi_l^r)=i$.  

         From the last two paragraphs, we have $f \in \Epi(A_n,A_m)$ such that for any $l \geq p$, 
         \begin{equation}\label{eqn_2_2}
             \textrm{for all } h \in \Epi(A_l,A_m) \textrm{ with } d_{\Epi(A_l,A_m)}(h, f \circ g_n^p \circ \varphi_p^l)\leq 1, \chi_l(h)=i
         \end{equation}
         Let $k \in K$ such that $\pi_m \circ k= f \circ \pi_n$. We claim $kg \in S_i$. 

         Note that $\pi_m \circ k \circ g=f \circ \pi_n \circ g=f \circ g_n^p \circ \pi_p$. Further for any $l \geq p$, $\pi_m \circ kg= f \circ g_n^p \circ \varphi_p^l \circ \pi_l$. Let $h \in \Epi(\limF,A_m)$ such that $d_{\Epi(\limF,A_m)}(h,\pi_m \circ kg)\leq 1$. Then by definition of $d_{\Epi(\limF,A_m)}$ we have some $l$ which we can assume  is at least $p$ and $h' \in \Epi(A_l,A_m)$ such that $h=h' \circ \pi_l$ and $d_{\Epi(A_l,A_m)}(h', f \circ g_n^p \circ \varphi_p^l) \leq 1$. So by Formula \ref{eqn_2_2}, $\chi_l(h')=i$, which implies that $\Delta(h)=i$. So $kg \in S_i$ as desired.
\end{proof}

\begin{claim}\label{claim_lem_1_3}
    Let $\varepsilon>0$ be such that for all $\mbf{x},\mbf{y} \in \limF$, $\neg(\pi_m(\mbf{x})R^{A_m}\pi_m(\mbf{y})) \implies d_F(q(\mbf{x}),q(\mbf{y}))\geq \varepsilon$.
    Let $i \neq j$. For any $s_i \in S_i$ and $s_j \in S_j$, $d_{\sup}(\Phi(s_i),\Phi(s_j))\geq \varepsilon$.
\end{claim}

\begin{proof}[Proof of Claim \ref{claim_lem_1_3}]
    Let $\varepsilon>0$ be as in the statement of the claim, that is:
    \begin{equation}\label{eqn_lem_1}
        \textrm{for all } \mbf{x},\mbf{y} \in \limF, \neg(\pi_m(\mbf{x})R^{A_m}\pi_m(\mbf{y})) \implies d_F(q(\mbf{x}),q(\mbf{y}))\geq \varepsilon
    \end{equation}
    and let $i\neq j$. Let $s_i \in S_i$ and $s_j \in S_j$.
    Note that since $\Delta(s_i)=i$ and $\Delta(s_j)=j$, we have that $d_{\Epi(\limF,A_m)}(\pi_m \circ s_i,\pi_m \circ s_j)>1$. So there is $l \geq m$ and $s_i',s_j' \in \Epi(A_l,A_m)$ with $\pi_m \circ s_i=s_i' \circ \pi_l$ and $\pi_m \circ s_j=s_j' \circ \pi_l$ and $d_{\Epi(A_l,A_m)}(s_i',s_j')>1$. This implies there exists $a \in A_l$ such that $d_{A_m}(s_i'(a),s_j'(a))>1 \implies \neg (s_i'(a) R^{A_m}s_j(a))$. Let $\mbf{x} \in \limF$ such that $\pi_l(\mbf{x})=a$. Then, $\pi_m \circ s_i(\mbf{x})=s_i'(a)$ and $\pi_m \circ s_j(\mbf{x})=s_j'(a)$, so $\neg (\pi_m(s_i(\mbf{x})),\pi_m(s_j(\mbf{x})))$. By Formula \ref{eqn_lem_1} this implies $d_F(q(s_i(\mbf{x})),q(s_j(\mbf{x})))\geq \varepsilon$. By definition of $\Phi$ we get $d_F(\Phi(s_i)(q(\mbf{x})),\Phi(s_j)(q(\mbf{x})))\geq \varepsilon$. Thus $d_{\sup}(\Phi(s_i),\Phi(s_j))\geq \varepsilon$.
\end{proof}
    Claims \ref{claim_lem_1_2} and \ref{claim_lem_1_3} imply that $S_1,\ldots, S_k$ satisfy the lemma.
\end{proof}

\section{Main theorems}\label{sec_main_thms}
\subsection{Extreme amenability}\label{sec_ea}
Here is the main theorem about extreme amenability in the context of projective Fraisse limits.

\begin{thm}\label{thm_main_ea}
Let $\mathcal{F}$ be a transitive projective Fraisse category with limit $\limF$. Let $\Phi:\Aut(\mathbb{F}) \to \Homeo(F)$ be the map induced by the quotient map, where $F=\mathbb{F} / R^{\mathbb{F}}$. Let $\tau$ be the topology on $\Aut(\limF)$ given by $\tau=\{\Phi^{-1}(U) \ : \ U \in \tau_{\Homeo(F)}\}$ where $\tau_{\Homeo(F)}$ is the uniform convergence topology on $\Homeo(F)$ induced by the supremum metric. The category $\F$ has the approximate Ramsey property if and only if $(\Aut(\limF),\tau_{\Homeo})$ is extremely amenable. 
\end{thm}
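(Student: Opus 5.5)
We reduce everything to Bartosova's criterion (Theorem \ref{thm_bart_1}) applied to the group $(\Aut(\limF),\tau)$, together with Lemmas \ref{lem_ardeg_synd_overlap} and \ref{lem_ardeg_to_synd} in the special case $k=1$. The key observation is that $\mathcal{F}$ has the approximate Ramsey property if and only if $\rdeg(A)\leq 1$ for every $A\in\mathcal{F}$. With a single allowed colour, ``$\norm{I}\leq 1$'' is the same as picking one colour $i$. Moreover, it suffices to check $\rdeg(A_m)\le 1$ for all $m$. Indeed, each $A\in\mathcal{F}$ is the image of some $A_m$ by condition (i) of Theorem \ref{thm_irw_sol}. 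Pulling colourings back along a fixed $\alpha\in\Epi(A_m,A)$ transfers the property from $A_m$ to $A$: post-composition with $\alpha$ is $1$-Lipschitz for the metrics $d_{\Epi}$, since morphisms preserve $R$. The witness $C$ for $A_m$ serves for $A$ as well.

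Since $\tau$ is induced by the right-invariant pseudometric $\rho(s,t)=d_{\sup}(\Phi(s),\Phi(t))$, Remark \ref{rmk_vs} lets us write $VS$ for $V=B_\rho(1,\varepsilon)$ as the set of $g$ with $\rho(g,s)<\varepsilon$ for some $s\in S$. The balls $B_\rho(1,\varepsilon)$ form a neighbourhood basis of the identity in $\tau$. Hence $VS\cap VT\neq\emptyset$ holds as soon as there are $s\in S$ and $t\in T$ with $\rho(s,t)<\varepsilon$. Conversely, if such a pair exists for $2\varepsilon$, then $VS\cap VT\neq\emptyset$ for the ball of radius $\varepsilon$: take $g=s$. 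We will therefore work with the condition ``for every $\varepsilon>0$ and all open syndetic $S,T$ there exist $s\in S$, $t\in T$ with $\rho(s,t)<\varepsilon$''.

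($\Rightarrow$) Assume the approximate Ramsey property and fix $\varepsilon>0$. By Lemma \ref{lem_graphdistsmall} with $M=2$, choose $m$ so that $d_{A_m}(\pi_m\mathbf{x},\pi_m\mathbf{y})\le 2$ implies $d_F(q\mathbf{x},q\mathbf{y})<\varepsilon$. Since $\rdeg(A_m)\le 1$, Lemma \ref{lem_ardeg_synd_overlap} with $k=1$ gives, for any syndetic $S_1,S_2$ (in particular any $\tau$-open syndetic sets, since syndeticity is purely algebraic), elements $s_i\in S_i$ with $\rho(s_1,s_2)<\varepsilon$. Bartosova's theorem then yields extreme amenability.

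($\Leftarrow$) Suppose the approximate Ramsey property fails, so some $A_m$ has approximate Ramsey degree not $\le 1$, i.e.\ $\ge 2$ in the sense used in Lemma \ref{lem_ardeg_to_synd}. By Lemma \ref{lem_metric_spread}, or directly by compactness and Lemma \ref{lem_rel_disj}, choose $\varepsilon>0$ satisfying the separation hypothesis of that lemma. The lemma gives syndetic $S_1,S_2$ with $\rho(s_1,s_2)\ge\varepsilon$ for all $s_i\in S_i$. These sets need not be $\tau$-open. We therefore replace them by $S_i'=B_\rho(1,\varepsilon/3)S_i$, which are $\tau$-open, still syndetic (they contain $S_i$), and satisfy $\rho(s_1',s_2')\geq\varepsilon/3$. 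Taking $V=B_\rho(1,\varepsilon/3)$, we get $VS_1'\cap VS_2'=\emptyset$, so by Theorem \ref{thm_bart_1} the group is not extremely amenable.

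The main obstacle is this openness bookkeeping in ($\Leftarrow$), together with checking that Bartosova's theorem applies to the possibly non-Hausdorff pseudometric topology $\tau$. However, $\Phi$ is injective, so $\rho$ is a genuine metric and $\tau$ is a metrizable group topology, and the issue disappears. The reduction of the approximate Ramsey property to $\rdeg(A_m)\le1$ for all $m$ also needs care, specifically the Lipschitz claim for post-composition.
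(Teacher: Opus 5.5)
Your route is the same as the paper's. For ($\Rightarrow$) you use Bartosova's criterion with Lemmas \ref{lem_graphdistsmall} and \ref{lem_ardeg_synd_overlap}. For ($\Leftarrow$) you use Lemmas \ref{lem_metric_spread} and \ref{lem_ardeg_to_synd} and then thicken the syndetic sets by a small ball.

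\emph{First gap: the constant in ($\Leftarrow$).} With your constants the last step fails. Take $V=B_\rho(1,\varepsilon/3)$ and $S_i'=VS_i$. A point $g\in VS_1'\cap VS_2'=VVS_1\cap VVS_2$ only forces $\rho(s_1,s_2)<4\varepsilon/3$ for some $s_i\in S_i$. That does not contradict $\rho(s_1,s_2)\geq\varepsilon$. Put another way, your bound $\rho(s_1',s_2')>\varepsilon/3$ is compatible with $\rho(g,s_1')<\varepsilon/3$ and $\rho(g,s_2')<\varepsilon/3$. You need a radius $r$ with $4r\leq\varepsilon$, which is why the paper uses $\varepsilon/4$. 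Alternatively, keep the $\varepsilon/3$ thickening but test against $V=B_\rho(1,\varepsilon/6)$.

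The slip seems to come from your ``Conversely'' sentence, which is reversed. A pair with $\rho(s,t)<2\varepsilon$ does not give $VS\cap VT\neq\emptyset$ for the ball of radius $\varepsilon$. What is true is the other direction: $VS\cap VT\neq\emptyset$ for that ball yields a pair with $\rho(s,t)<2\varepsilon$.

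\emph{Second gap: the reduction to some $A_m$.} You need this in ($\Leftarrow$), because Lemma \ref{lem_ardeg_to_synd} is stated for an $A_m$. Your justification does not work as written. Pulling $\chi$ back along $\alpha\in\Epi(A_m,A)$ and using a witness $C$ for $A_m,B,d$ only gives
$\alpha\circ\Epi(B,A_m)\circ f\subseteq[\chi^{-1}(i)]_1$.
In general $\Epi(B,A)$ is strictly larger than $\alpha\circ\Epi(B,A_m)$. For example, with $B=A$, writing $\mathrm{id}_A=\alpha\circ h'$ would need a surjective $h'\colon A\to A_m$, which is impossible when $|A_m|>|A|$. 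So ``the witness $C$ for $A_m$ serves for $A$'' is false as stated.

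The repair is to enlarge $B$ first:
\begin{enumerate}
\item Take $\beta_0\in\Epi(A_n,B)$.
\item For each of the finitely many $h\in\Epi(B,A)$, apply the projective extension property of $\limF$ to $h\circ\beta_0\circ\pi_n$ and $\alpha$.
\item For $n'$ large this gives $\beta\in\Epi(A_{n'},B)$ such that every $h\circ\beta$ factors as $\alpha\circ h''$ with $h''\in\Epi(A_{n'},A_m)$.
\item Take a witness $C$ for $A_m,A_{n'},d$ and set $f=\beta\circ f'$. Your $1$-Lipschitz observation then makes $C$ work for $A,B,d$.
\end{enumerate}
The paper simply asserts ``we may assume $A=A_m$'', so this is a step it leaves implicit. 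With both repairs your proof coincides with the paper's.
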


Before we prove the theorem we point out the corollary of interest from the perspective of studying homeomorphism groups of connected spaces:

\begin{cor}\label{cor_main_ea}
    Let $\mathcal{F},\limF,F$ as in the statement of Theorem \ref{thm_main_ea}. The category $\F$ has the approximate Ramsey property if and only if $\overline{\Phi[\Aut(\limF)]}$ is an extremely amenable subgroup of $\Homeo(F)$. 
\end{cor}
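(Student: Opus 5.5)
The plan is to derive the corollary from Theorem \ref{thm_main_ea} together with Proposition \ref{prop_dense_subgp}(1). Let $G=\overline{\Phi[\Aut(\limF)]}$, the closure taken in $\Homeo(F)$ with the uniform convergence topology, and let $H=\Phi[\Aut(\limF)]\leq G$. Since $\Phi$ is an injective homomorphism (Section~2), $H$ is a subgroup of $\Homeo(F)$. Its closure $G$ is then a closed subgroup of the topological group $\Homeo(F)$, and $H$ is dense in $G$ by definition. Proposition \ref{prop_dense_subgp}(1) therefore gives: $G$ is extremely amenable if and only if $H$, with the subspace topology inherited from $\Homeo(F)$, is extremely amenable.

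Next I will identify $H$ with the subspace topology and $(\Aut(\limF),\tau)$ as topological groups, where $\tau=\{\Phi^{-1}(U) : U \in \tau_{\Homeo(F)}\}$ is the topology of Theorem \ref{thm_main_ea}. The map $\Phi:\Aut(\limF)\to H$ is a bijective group homomorphism. The open sets of $\tau$ are exactly the preimages under $\Phi$ of the sets $U\cap H$, which are the open sets of $H$. Hence $\Phi$ is a homeomorphism between $(\Aut(\limF),\tau)$ and $H$, and so an isomorphism of topological groups. In particular $\tau$ is a group topology. Extreme amenability is invariant under isomorphism of topological groups, because flows transfer along $\Phi$. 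So $H$ is extremely amenable if and only if $(\Aut(\limF),\tau)$ is.

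Combining the two equivalences with Theorem \ref{thm_main_ea}, which says that $\mathcal{F}$ has the approximate Ramsey property if and only if $(\Aut(\limF),\tau)$ is extremely amenable, gives the corollary. The only point that needs care is the injectivity of $\Phi$, which the identification with $H$ requires. Injectivity was asserted in Section~2. If one prefers not to rely on it, the same argument still works: $\tau$ is then the initial topology, $\Phi$ is a continuous open surjection from $(\Aut(\limF),\tau)$ onto $H$, and every $H$-flow pulls back to an $(\Aut(\limF),\tau)$-flow. Conversely, any $(\Aut(\limF),\tau)$-flow factors through $H$, since $\ker\Phi$ lies in the closure of the identity and acts trivially on compact Hausdorff flows. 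Thus the two groups have the same flows and extreme amenability is preserved. All the substance is in Theorem \ref{thm_main_ea}, so I expect no real obstacle here.
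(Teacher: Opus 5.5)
Your proposal is correct and takes the same route as the paper, whose proof simply combines Theorem~\ref{thm_main_ea} with Proposition~\ref{prop_dense_subgp}; you additionally make explicit the identification of $(\Aut(\limF),\tau)$ with $\Phi[\Aut(\limF)]$ in the subspace topology, which the paper leaves implicit. Your fallback for non-injective $\Phi$ is a worthwhile safeguard: the injectivity asserted in Section~2 can fail in general (for the category of all finite complete graphs, $F$ is a point and $\Phi$ is trivial on the nontrivial group $\Aut(\limF)$), and your observation that $\ker\Phi\subseteq\overline{\{1\}}$ acts trivially on every flow covers exactly that case.
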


\begin{proof}[Proof of Corollary \ref{cor_main_ea}]
    This follows from Theorem \ref{thm_main_ea} and Proposition \ref{prop_dense_subgp}.
\end{proof}

\begin{proof}[Proof of $\implies$ of Theorem \ref{thm_main_ea}]
    Assume that $\mathcal{F}$ has the approximate Ramsey property. We will show that $(\Aut(\limF),\tau)$ is extremely amenable by checking it has the condition from Theorem \ref{thm_bart_1}. Let $d_{\sup}$ be the supremum metric on $\Homeo(F)$. 

    Let $V\subseteq \Aut(\limF)$ be an open neighborhood of the identity in $(\Aut(\limF),\tau)$ and let $S,T$ be open syndetic subsets of $\Aut(\limF)$. We want to show that $VS \cap VT \neq \emptyset$. We may assume that there is $\varepsilon>0$ such that:
    \[V=\Phi^{-1}(B_{d_{\sup}}(1,\varepsilon))=\{f \in \Aut(\limF) \ : \ d_{\sup}(\Phi(f),1)<\varepsilon\}\]
    We want to show that $VS \cap VT \neq \emptyset$. By Remark \ref{rmk_vs} it suffices to show that there is $s \in S$ and $t \in T$ such that $d_{\sup}(\Phi(s),\Phi(t))<\varepsilon$.

     Following the notation from Section \ref{subsec_ntn_limit}, let $\limF = \varprojlim (A_n,\varphi_n^{n+1})$ with $A_n$ and $\varphi_n^{n+1}$ in $\mathcal{F}$. Let $\pi_n:\limF \to A_n$ be the $n$th projection map and $q:\limF \to F = \limF / R^{\limF}$ the quotient map. By Lemma \ref{lem_graphdistsmall}, let $m \in \N$ such that for all $\bf{x},\bf{y} \in \limF$:
    \begin{equation}\label{eqn_thm_1_1} 
    d_{A_{m}}(\pi_{m}(\mathbf{x}),\pi_{m}(\mathbf{y}))\leq 2\implies d_F(q(\mbf{x}),q(\mbf{y}))<\varepsilon 
    \end{equation}
    Since $\mathcal{F}$ has the approximate Ramsey property, $\rdeg(A_m)=1$ in $\mathcal{F}$. So by Equation \ref{eqn_thm_1_1} and Lemma \ref{lem_ardeg_synd_overlap}, there is $s \in S$ and $t \in T$ with $d_{\sup}(\Phi(s),\Phi(t))<\varepsilon$ and this finishes the proof.     
\end{proof}

 \begin{proof}[Proof of $\impliedby$ of Theorem \ref{thm_main_ea}]
     We will prove the contrapositive. Assume that $\mathcal{F}$ fails to have the Approximate Ramsey property. Then, let $A \in \mathcal{F}$ with Approximate Ramsey degree of $A \geq 2$ in $\mathcal{F}$. We may assume $A=A_m$ where $\limF=\varprojlim(A_n,\varphi_n^{n+1})$. By Lemma \ref{lem_metric_spread}, let $\varepsilon>0$ such that for all $\mathbf{x},\mathbf{y} \in \limF$, $\neg (\pi_m(\mbf{x})R^{A_m}\pi_m(\mbf{y})) \implies d_F(r(\mbf{x}),q(\mbf{y}))\geq \varepsilon$. By Lemma \ref{lem_ardeg_to_synd}, there exists $S_1,S_2$ syndetic subsets of $\Aut(\limF)$ such that for all $s_1 \in S_1, s_2 \in S_2$, $d_{\sup}(\Phi(s_1),\Phi(s_2)) \geq \varepsilon$.

     Now, let $V=\Phi^{-1}(B_{d_{\sup}}(1,\varepsilon/4))$. Then, $V$ is open in $(\Aut(\limF),\tau)$ and so is $VS_1$ and $VS_2$. Each $VS_i \supseteq S_i$ and so is syndetic.  By Remark \ref{rmk_vs}, $V(VS)S \cap V(VT)=\emptyset$. So by Theorem \ref{thm_bart_1}, $(\Aut(\limF),\tau)$ is not extremely amenable.
 \end{proof}  

 \subsection{Metrizability of the universal minimal flow}\label{sec_umf}
Now we prove the appropriate generalization of Theorem \ref{thm_main_ea} but for metrizability of the universal minimal flow.

\begin{thm}\label{thm_main_umf}
Let $\mathcal{F}$ be a transitive projective Fraisse category with limit $\limF$. Let $\Phi:\Aut(\mathbb{F}) \to \Homeo(F)$ be the map induced by the quotient map, where $F=\mathbb{F} / R^{\mathbb{F}}$. Let $\tau$ be the topology on $\Aut(\limF)$ given by $\tau=\{\Phi^{-1}(U) \ : \ U \in \tau_{\Homeo(F)}\}$ where $\tau_{\Homeo(F)}$ is the uniform convergence topology on $\Homeo(F)$ induced by the supremum metric. Every $A \in \mathcal{F}$ has finite approximate Ramsey degree if and only if $(\Aut(\limF),\tau_{\Homeo})$ has metrizable universal minimal flow. 
\end{thm}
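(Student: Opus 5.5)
The plan is to follow the structure of the proof of Theorem \ref{thm_main_ea}, replacing Bartosova's criterion (Theorem \ref{thm_bart_1}) by the Domat--Iyer--Shinko criterion (Theorem \ref{thm_dis}). Lemmas \ref{lem_ardeg_synd_overlap} and \ref{lem_ardeg_to_synd} are already stated for general degree $k$. The group $(\Aut(\limF),\tau)$ is first countable, since $\tau$ is generated by the right-invariant pseudometric $(f,g)\mapsto d_{\sup}(\Phi(f),\Phi(g))$. The sets $V_\varepsilon=\Phi^{-1}(B_{d_{\sup}}(1,\varepsilon))$ form a neighborhood basis at the identity, so Theorem \ref{thm_dis} applies. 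As in Remark \ref{rmk_vs}, $V_\varepsilon S\cap V_\varepsilon T\neq\emptyset$ if and only if there are $s\in S$ and $t\in T$ with $d_{\sup}(\Phi(s),\Phi(t))<2\varepsilon$ (up to the usual factor). The reason is that the pseudometric is right-invariant, so $V_\varepsilon S$ is the open $\varepsilon$-neighborhood of $S$.

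For $\implies$, assume every $A\in\mathcal{F}$ has finite approximate Ramsey degree, and fix $V=V_\varepsilon$. By Lemma \ref{lem_graphdistsmall}, I will choose $m$ such that $d_{A_m}(\pi_m(\mbf{x}),\pi_m(\mbf{y}))\leq 2$ implies $d_F(q(\mbf{x}),q(\mbf{y}))<\varepsilon$. Let $k=\rdeg(A_m)<\infty$. Given open syndetic $S_1,\ldots,S_{k+1}$, Lemma \ref{lem_ardeg_synd_overlap} gives $i\neq j$, $s_i\in S_i$ and $s_j\in S_j$ with $d_{\sup}(\Phi(s_i),\Phi(s_j))<\varepsilon$. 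Then $s_i\in VS_j$, so $VS_i\cap VS_j\neq\emptyset$, and Theorem \ref{thm_dis} yields a metrizable universal minimal flow.

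For $\impliedby$, argue by contrapositive. Suppose some $A$ has infinite approximate Ramsey degree. First I need to move $A$ into the limit sequence. Since $\Epi(A_m,A)\neq\emptyset$ for some $m$, I expect the degree to be monotone: $\rdeg(A_m)\geq\rdeg(A)$ whenever $\Epi(A_m,A)\neq\emptyset$, or at least $\rdeg(A_m)=\infty$. To prove this, I would pull back a coloring of $\Epi(C,A)$ to $\Epi(C,A_m)$ through a fixed $e\in\Epi(A_m,A)$, and also handle the $[\,\cdot\,]_1$ neighborhoods using the 1-Lipschitz property of post-composition by graph morphisms. This step is the main obstacle, because composing with $e$ loses information in the wrong direction. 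If it does not work, I would instead state Lemma \ref{lem_ardeg_to_synd} directly for an arbitrary $A$ with a morphism $\pi:\limF\to A$, replacing $\pi_m$ by $e\circ\pi_m$. The proof of that lemma appears to go through verbatim, using Lemma \ref{lem_metric_spread} in the form that non-adjacency of $e\pi_m(\mbf{x})$ and $e\pi_m(\mbf{y})$ forces $d_F(q(\mbf{x}),q(\mbf{y}))\geq\varepsilon$. This form holds by the argument of Lemma \ref{lem_rel_disj}, since $e$ preserves $R$.

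Fix $\varepsilon$ from Lemma \ref{lem_metric_spread} and let $V=V_{\varepsilon/4}$. Let $k$ be arbitrary. Since the degree is greater than $k$, Lemma \ref{lem_ardeg_to_synd} with $k+1$ gives syndetic $S_1,\ldots,S_{k+1}$ that are pairwise $\varepsilon$-apart under $\Phi$. The sets $S_i'=VS_i$ are open and syndetic, and $V$ has small radius relative to the separation. So $VS_i'\cap VS_j'=V^2S_i\cap V^2S_j=\emptyset$ for all $i\neq j$, by the triangle inequality and right invariance. Since $k$ was arbitrary, the criterion of Theorem \ref{thm_dis} fails for this $V$, so the universal minimal flow is not metrizable.
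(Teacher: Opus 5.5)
Your proof is correct and is essentially the paper's. The forward direction is the same argument (Lemma \ref{lem_graphdistsmall}, Lemma \ref{lem_ardeg_synd_overlap}, Theorem \ref{thm_dis}). The converse uses the same $V=\Phi^{-1}(B_{d_{\sup}}(1,\varepsilon/4))$ together with Lemmas \ref{lem_metric_spread} and \ref{lem_ardeg_to_synd}. You also verify first countability, which the paper leaves implicit.

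The only divergence is the step you flagged. The paper simply asserts that if $\rdeg(A)=\infty$ and $\Epi(B,A)\neq\emptyset$, then $\rdeg(B)=\infty$, and uses this to reduce to $A=A_m$.

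Your fallback avoids this and needs nothing new. For $e\in\Epi(A_m,A)$, the inverse limit of $A, A_m, A_{m+1},\ldots$ with bonding maps $e,\varphi_m^{m+1},\ldots$ is isomorphic to $\limF$. So you can re-present $\limF$ with $A$ as its zeroth term and apply Lemma \ref{lem_ardeg_to_synd} as stated. If you instead keep the old presentation and replace $\pi_m$ by $e\circ\pi_m$, then building the finite set $K$ in Claim \ref{claim_lem_1_2} requires the projective extension property before Lemma \ref{lem_ext_to_aut}.

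The monotonicity is also true, so the paper's route is sound. Your pull-back idea only lacks a preliminary amalgamation step:
\begin{itemize}
\item Apply projective amalgamation successively for each of the finitely many $h\in\Epi(D,A)$. This yields $p\in\Epi(D',D)$ such that every $h\circ p$ equals $e\circ h'$ for some $h'\in\Epi(D',A_m)$.
\item Let $C$ witness $\rdeg(A_m)\le k$ for $D',d$, and apply this to the coloring $g\mapsto\chi(e\circ g)$ of $\Epi(C,A_m)$.
\item Since $e$ is $1$-Lipschitz, $C$ (with $p\circ f$) then witnesses $\rdeg(A)\le k$ for $D,d$. Hence $\rdeg(A)\le\rdeg(A_m)$.
\end{itemize}
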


In the proof below we use for any $k \in \N$, $[k]=\{1,2,\ldots,k\}$.

\begin{proof}[Proof of $\implies$ of Theorem \ref{thm_main_umf}]
    Assume that every $A \in\mathcal{F}$ has finite Approximate Ramsey degree. We will show that $(\Aut(\limF),\tau)$ has metrizable universal minimal flow by checking the condition from Theorem \ref{thm_dis}. Let $d_{\sup}$ as usual be the supremum metric on $\Homeo(F)$.

    Let $V \subseteq (\Aut(\limF),\tau)$ be an open neighborhood of the identity. We may assume that $V=\Phi^{-1}(B_{d_{\sup}}(1,\varepsilon))$ for some $\varepsilon >0$. 

    Let $\limF=\varprojlim(A_n,\varphi_n^{n+1})$. By Lemma \ref{lem_graphdistsmall}, let $m \in \N$ so that for all $\mbf{x},\mbf{y} \in \limF$:
    \begin{equation}\label{eqn_3_1}
        d_{A_{m}}(\pi_{m}(\mbf{x}),\pi_{m}(\mbf{y})) \leq 2 \implies d_F(q(\mbf{x}),q(\mbf{y}))<\varepsilon
    \end{equation}

    Let $k=\rdeg(A_{m})$. We claim for any $S_,\ldots, S_{k+1}$ open syndetic in $(\Aut(\limF),\tau)$, there is $i \neq j$ so that $VS_i \cap VS_j \neq \emptyset$. Once we prove this, we are done by Theorem \ref{thm_dis}.

    So let $S_1,\ldots, S_{k+1}$ be open and syndetic in $(\Aut(\limF),\tau)$. We want to show there is $i \neq j$ so that $VS_i \cap VS_j \neq \emptyset$. By Remark \ref{rmk_vs}, it suffices to show that there is $i \neq j$ and $s_i \in S_i$ and $s_j \in S_j$ with $d_{\sup}(\Phi(s_i),\Phi(s_j))<\varepsilon$. This follows by the fact that $\rdeg(A_m) \leq k$, Equation \ref{eqn_3_1} and Lemma \ref{lem_ardeg_synd_overlap}.
\end{proof}

\begin{proof}[Proof of $\impliedby$ of Theorem \ref{thm_main_umf}]
    We prove the contrapositive. Let $A \in \mathcal{F}$ have infinite Approximate Ramsey degree in $\mathcal{F}$. Any $B \in \mathcal{F}$ with $\Epi(B,A) \neq \emptyset$ will also have infinite Ramsey degree in $\mathcal{F}$, so we may assume that $A=A_m$ where $\limF=\varprojlim(A_n,\varphi_n^{n+1})$. By Lemma \ref{lem_metric_spread}, fix $\varepsilon >0$ such that for all $\mbf{x},\mbf{y} \in \limF$, 
    \[\ \neg (\pi_m(\mbf{x})R^{A_m}\pi_m(\mbf{x})) \implies d_F(q(\mbf{x}),q(\mbf{y}))\geq \varepsilon\]

    Let $V=\Phi^{-1}(B_{d_{\sup}}(1,\varepsilon/4))$. Let $k \in \N$. Since $\rdeg(A_m) \geq k$, by Lemma \ref{lem_ardeg_to_synd} there exists $S_1,\ldots, S_k$ syndetic subsets of $\Aut(\limF)$ such that for all $i \neq j$, $s_i \in S_i, s_j\in S_j$, we have $d_{\sup}(\Phi(s_i),\Phi(s_j))\geq \varepsilon$. So $VS_1,\ldots,VS_k$ are open syndetic subsets of $(\Aut(\limF),\tau)$ such that by Remark \ref{rmk_vs} $V(VS_1), \ldots, V(VS_k)$ are pairwise disjoint. Since this holds for any $k \in \N$, by Theorem \ref{thm_dis} $(\Aut(\limF),\tau)$ has non-metrizable universal minimal flow.
\end{proof}

\begin{cor}\label{cor_main_metr_umf}
    Let $\mathcal{F},\limF,F$ as in the statement of Theorem \ref{thm_main_umf}. Every $A \in \mathcal{F}$ has finite approximate Ramsey degree in $\mathcal{F}$ if and only if $\overline{\Phi[\Aut(\limF)]}$ with the topology inherited as a closed subgroup of $\Homeo(F)$ has metrizable universal minimal flow.
\end{cor}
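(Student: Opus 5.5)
This follows from Theorem \ref{thm_main_umf} combined with Proposition \ref{prop_dense_subgp}(2), exactly as Corollary \ref{cor_main_ea} followed from Theorem \ref{thm_main_ea}. Let $G=\overline{\Phi[\Aut(\limF)]}$, a closed subgroup of $\Homeo(F)$ with the uniform convergence topology. Let $H=\Phi[\Aut(\limF)]\leq G$; it is dense in $G$ by definition.

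The first step is to identify $H$, with its subspace topology from $G$, with the topological group $(\Aut(\limF),\tau)$ of Theorem \ref{thm_main_umf}. The map $\Phi$ is an injective group homomorphism: it is a homomorphism because $\Phi(fg)(q(x))=q(fg(x))=\Phi(f)\Phi(g)(q(x))$, and injectivity was noted in the paper. By definition, $\tau$ is the pullback under $\Phi$ of the uniform convergence topology. So $\Phi:(\Aut(\limF),\tau)\to H$ is a bijective homomorphism whose open sets correspond exactly to the relatively open subsets of $H$. Hence it is an isomorphism of topological groups, and the two groups have isomorphic universal minimal flows.

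Next, Proposition \ref{prop_dense_subgp}(2) applied to the dense subgroup $H\leq G$ shows that $G$ has metrizable universal minimal flow if and only if $H$ does. By the first step this holds if and only if $(\Aut(\limF),\tau)$ does. Theorem \ref{thm_main_umf} says that $(\Aut(\limF),\tau)$ has metrizable universal minimal flow exactly when every $A\in\mathcal{F}$ has finite approximate Ramsey degree.

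There is no substantive obstacle. The only points needing care are that $\tau$ is precisely the subspace topology transported by the injective homomorphism $\Phi$, and that first countability, which Theorem \ref{thm_dis} requires, already holds in Theorem \ref{thm_main_umf}: $\tau$ is induced by the pseudometric $d_{\sup}\circ(\Phi\times\Phi)$, which is a metric because $\Phi$ is injective.
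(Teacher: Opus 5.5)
Your proposal is correct and takes the same route as the paper. The corollary is stated without a separate proof, but it follows from Theorem \ref{thm_main_umf} together with Proposition \ref{prop_dense_subgp}(2), exactly as Corollary \ref{cor_main_ea} follows from Theorem \ref{thm_main_ea}. Your explicit check that $\Phi$ is a topological group isomorphism from $(\Aut(\limF),\tau)$ onto $\Phi[\Aut(\limF)]$ with its subspace topology is a detail the paper leaves implicit.
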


\section{Application to universal Pseudo-solenoid}\label{sec:applications}

A \textbf{continuum} is a compact, connected, metrizable space. If $X$ is a continuum, a \textbf{subcontinuum} of $X$ is $Y\subseteq X$ where $Y$ is compact, connected. We say a continuum $X$ is \textbf{indecomposable} if whenever $X=A \cup B$ and $A,B$ are subcontinua of $X$ we have that $A=X$ or $B=X$. A continuum is \textbf{hereditarily indecomposable} if every subcontinuum of it is indecomposable.

A \textbf{chain} is a collection of open sets $U_1,\ldots, U_n$ such that $U_i \cap U_j \neq \emptyset \iff \norm{i-j}\leq 1$. A \textbf{circular chain} is a collection of open sets $U_1,\ldots, U_n$ such that $U_i \cap U_j \neq \emptyset \iff (\norm{i-j}\leq 1 \textrm{ or } \{i,j\}=\{1,n\})$. A continuum $X$ is \textbf{chainable} (resp. \textbf{circularly chainable}) if every open cover of $X$ is refined by a chain (resp. circular chain). 

\begin{defn}\label{defn_solenoid}
    A \textbf{pseudo-solenoid} is a continuum which is circularly chainable, not chainable, and hereditarily indecomposable.
\end{defn}

\begin{defn}(Rogers \cite{rogers})
    The \textbf{universal pseudo-solenoid} is the unique up to homeomorphism pseudo-solenoid which continuously surjects onto any circularly chainable, not chainable continuum. 
\end{defn}

Note that a universal pseudo-solenoid continuously surjects onto any pseudo-solenoid. For more on the classification of pseudo-solenoids up to homeomorphism and the universal pseudo-solenoid, see \cite{fearnley,rogers}. In the literature some authors use the term ``pseudo-circle'' and some use the term pseudo-solenoid. For the remainder of this section, let $S$ be the universal pseudo-solenoid. 

Irwin in \cite{irwin_thesis} constructed a projective Fraisse category which approximates $S$ and we review his construction now. 

A finite graph $(A,R^A)$ is a \textbf{cycle} if for every vertex $a \in A$, there are exactly two vertices other than $a$ which are $R^A$-related to $a$. We will typically think of a cycle $A$ with $n$ vertices as with a labeling of its vertices by the numbers $0,1,\ldots,n-1$ such that $i R^Aj \iff \norm{i-j}\leq 1 \textrm{ or } \{i,j\}=\{0,n-1\}$.

Let $(C,R^C), (D,R^D)$ be cycles and let $f:D \to C$ be a surjective morphism. Let $C$ have $m$ vertices labeled by $0,1,\ldots, m-1$ and $D$ have $n$ vertices labeled by $0,1,\ldots, n-1$ as described above. 

Let $i,j \in \{0,1,\ldots,n-1\}$ with $iR^D j$. We say that $(i,j)$ is \textbf{positively oriented for $f$} iff $(f(i),f(j)) \in \{(k,k+1) \ : \ k=0,1,\ldots, m-2 \}\cup \{(m-1,0)\}$. We say that $(i,j)$ is \textbf{constant for $f$} if $f(i)=f(j)$. Otherwise we say that $(i,j)$ is \textbf{negatively oriented for $f$}.

Let $D^{\textrm{path}}=\{(i,i+1) \ : \ i =0,1,\ldots, n-2\} \cup\{(n-1,0)\}$. The edges in $D^{\textrm{path}}$ collect one full ``path'' around cycle $D$.

Let 
\[D_f^+=\{(i,j) \in D^{\textrm{path}} \ : \ (i,j) \textrm{ is positively oriented for $f$}\}\]
and let
\[D_f^-=\{(i,j) \in D^{\textrm{path}} \ : \ (i,j) \textrm{ is negatively oriented for $f$}\}\]

Define the \textbf{degree of $f$}, denoted $\deg(f)$ to be 
\[\deg(f)= \frac{\norm{\norm{D_f^+}-\norm{D_f^-}}}{m}\]

In words, the degree of $f$ is the number of times that morphism $f$ wraps the cycle $D$ around the cycle $C$. Notice that for any surjective morphism $f$, $\deg(f)$ is a natural number. 


A useful fact is that degree is multiplicative: 

\begin{lem}[\cite{irwin_thesis} Lemma 4.3]\label{lem_degree_multiplicative}
    Let $f:D \to C$, $g:C \to B$ be  surjective morphisms between cycles. Then $\deg(g \circ f) =\deg (g) \deg(f)$.
\end{lem}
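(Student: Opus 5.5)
The plan is to replace the unsigned count $\norm{D_f^+}-\norm{D_f^-}$ by a \emph{signed winding number} computed via lifts to $\mathbb{Z}$, to show that this signed quantity is exactly multiplicative, and then to take absolute values.

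\textbf{Setup: increments and the signed winding number.} Let $f:D\to C$ be a surjective morphism between cycles, where $D$ has $n$ vertices and $C$ has $m$ vertices. Since every vertex of a cycle has exactly two other neighbours, $m\geq 3$. Consequently, for two $R^C$-related vertices $c,c'$ exactly one of $c'=c$, $c'=c+1$, $c'=c-1$ holds (indices mod $m$). For an edge $(i,j)\in D^{\textrm{path}}$ define the increment
\[\sigma_f(i,j)\in\{+1,0,-1\}\]
according to whether $(i,j)$ is positively oriented, constant, or negatively oriented for $f$. This is well defined because $f$ preserves $R$. Put $W(f)=\sum_{(i,j)\in D^{\textrm{path}}}\sigma_f(i,j)$, so that $\deg(f)=\norm{W(f)}/m$.

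\textbf{Step 1: lifts.} Let $p_D:\mathbb{Z}\to D$ and $p_C:\mathbb{Z}\to C$ be reduction mod $n$ and mod $m$. Define $\tilde f:\mathbb{Z}\to\mathbb{Z}$ by choosing $\tilde f(0)\in\{0,\dots,m-1\}$ with $p_C(\tilde f(0))=f(0)$, and setting
\[\tilde f(k+1)=\tilde f(k)+\sigma_f(p_D(k),p_D(k+1))\]
for all $k$ (extending to negative $k$ in the same way). By induction, $p_C\circ\tilde f=f\circ p_D$. Summing the increments over one full period gives $\tilde f(k+n)=\tilde f(k)+W(f)$. Applying $p_C$ then shows $m\mid W(f)$, which recovers that $\deg(f)\in\N$. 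Write $w(f)=W(f)/m\in\mathbb{Z}$. Iterating the period relation gives the equivariance
\[\tilde f(k+tn)=\tilde f(k)+t\,m\,w(f)\qquad\text{for all }t\in\mathbb{Z}.\]

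\textbf{Step 2: the composite of lifts is a lift of the composite.} Let $g:C\to B$ be a surjective morphism onto a cycle $B$ with $l$ vertices, and let $\tilde g$ be its lift. I claim that $\tilde g\circ\tilde f$ is a lift of $g\circ f$, meaning that for every $k$,
\[\tilde g\tilde f(k+1)-\tilde g\tilde f(k)=\sigma_{g\circ f}(p_D(k),p_D(k+1)).\]
Fix $k$ and consider the step of $\tilde f$ from $k$ to $k+1$. If this step is $0$, both sides vanish, since $f$ is then constant on the edge. If the step is $+1$, then $\tilde f$ traverses the $C$-path edge $(c,c+1)$ with $c=f(p_D(k))$, so the left side equals $\sigma_g(c,c+1)$. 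This is exactly the increment of $g\circ f$ on the $D$-edge, because $g\circ f$ maps it onto $(g(c),g(c+1))$. If the step is $-1$, the edge is traversed backwards, and one uses that the increment of $g$ on a reversed edge is the negative of its increment on the edge, so again the two sides agree. Here $l\geq 3$ is used again, to guarantee that the orientation in $B$ is unambiguous. I expect this local compatibility check, and in particular the careful handling of the reversed and constant cases, to be the only real obstacle.

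\textbf{Step 3: conclusion.} Using the equivariance of $\tilde g$ (with period $m$) from Step 1,
\[\tilde g\tilde f(k+n)=\tilde g\bigl(\tilde f(k)+m\,w(f)\bigr)=\tilde g\tilde f(k)+w(f)\,l\,w(g).\]
On the other hand, by Step 2 the composite $\tilde g\circ\tilde f$ is a lift of $g\circ f$, so summing its increments over one period of $D$ gives $W(g\circ f)$. Comparing the two expressions yields $W(g\circ f)=l\,w(f)\,w(g)$, that is, $w(g\circ f)=w(g)\,w(f)$. Taking absolute values gives $\deg(g\circ f)=\deg(g)\deg(f)$.
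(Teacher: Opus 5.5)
The paper does not prove this lemma; it is quoted from Irwin's thesis (Lemma 4.3), so there is no in-paper argument to compare yours with. Your proof is correct and complete.

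\begin{itemize}
\item $\sigma_f$ is well defined because a cycle has at least $3$ vertices.
\item The lift satisfies $p_C\circ\tilde f=f\circ p_D$.
\item The period relation gives $m\mid W(f)$, together with the equivariance for all $t\in\mathbb{Z}$. You genuinely need the negative values of $t$ in Step 3, since $w(f)$ can be negative.
\item The case analysis in Step 2 is right. The $-1$ case reduces, via $p_C(\tilde f(k)-1)=f(p_D(k+1))$, to the fact that reversing an edge negates its orientation in $B$.
\end{itemize}

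Your device is the same one the paper uses in the proof of Lemma \ref{lem_deg_cont}, where lifts $\{0,\dots,n\}\to\mathbb{Z}$ are built edge by edge from orientations. Extending the lift periodically to all of $\mathbb{Z}$ is what lets lifts compose. It also gives you the stronger signed identity $w(g\circ f)=w(g)\,w(f)$, of which the lemma is the absolute value.

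One point is worth making explicit. $\deg$ is defined relative to labelings of the cycles, and you use a single labeling of $C$ for both $f$ and $g$. This is harmless: relabeling a cycle by a rotation leaves $W$ unchanged, and relabeling by a reflection only changes its sign.
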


\begin{defn}[Irwin \cite{irwin_thesis}]
    Let $\mathcal{S}$ be the category whose objects are cycles and morphisms are  surjective morphisms of non-zero degree. 
\end{defn}

By \cite{irwin_thesis} Theorem 4.9, $\mathcal{S}$ is a transitive projective Fraisse category. Let $\mathbb{S}$ be the limit of $\mathcal{S}$. Then by \cite{irwin_thesis} Theorem 4.20, $\mathbb{S} / R^{\mathbb{S}}$ is the universal pseudo-solenoid. Let $q:\mathbb{S} \to \mathbb{S} / R^{\mathbb{S}}=S$ be the quotient map and  let $\Phi:\Aut(\limS) \to \Homeo(S)$ be the map induced by the quotient map. The following lemma is derived from a combination of results from \cite{irwin_thesis}:

\begin{lem}[Irwin \cite{irwin_thesis}]\label{lem_fraisse_dense_pseudo_sol}
    The subgroup $\Phi[\Aut(\limS)]$ is a dense subset of $\Homeo(S)$.
\end{lem}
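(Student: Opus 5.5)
The plan is to follow the standard scheme for such density results: show that homeomorphisms of $S$ can be approximated uniformly by homeomorphisms that lift through the quotient $q$ to automorphisms of $\limS$. Fix $h \in \Homeo(S)$ and $\varepsilon>0$. We must find $g \in \Aut(\limS)$ with $d_{\sup}(\Phi(g),h)<\varepsilon$.

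\textbf{Choosing a level and a candidate map.} Write $\limS=\varprojlim(A_n,\varphi_n^{n+1})$. By Lemma \ref{lem_graphdistsmall}, choose $m$ so that $d_{A_m}(\pi_m(\mbf{x}),\pi_m(\mbf{y}))\leq 2$ implies $d_S(q(\mbf{x}),q(\mbf{y}))<\varepsilon$. The sets $q[\pi_m^{-1}(a)]$, $a\in A_m$, form a closed circular-chain cover of $S$ whose nerve is $R^{A_m}$ (Lemma \ref{lem_rel_disj}). Pull this cover back along $h$. By uniform continuity of $h$ and by Lemma \ref{lem_graphdistsmall} applied with a smaller tolerance, choose $N\geq m$ so that each $h(q[\pi_N^{-1}(b)])$ meets only sets $q[\pi_m^{-1}(a)]$ with $a$ in a set of graph-diameter at most $1$.

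Now define $f:A_N\to A_m$ by picking, for each $b$, some $a$ with $h(q[\pi_N^{-1}(b)])\cap q[\pi_m^{-1}(a)]\neq\emptyset$.

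\textbf{Checking $f$ is a morphism of $\mathcal{S}$.} If $b R^{A_N} b'$, the two images intersect (Lemma \ref{lem_rel_disj}), so $f(b)$ and $f(b')$ are within distance $2$. We may need to pass to a further level or modify $f$ so that adjacent vertices go to adjacent ones. Surjectivity and surjectivity on edges follow from $h$ being onto and from connectedness: every edge $a R^{A_m} a'$ is realized by a point of $S$ lying in both sets, and that point is the image of a point in some $q[\pi_N^{-1}(b)]$. The hardest part is showing that $f$ has nonzero degree. Here I would invoke Irwin's arguments: a homeomorphism of a circularly chainable, non-chainable continuum cannot induce a degree-zero pattern, since a degree-zero map factors through an arc (chain), which would make $S$ chainable or force $h$ to be non-surjective. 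This is the step I expect to be the main obstacle, together with the cleanup that makes $f$ an honest graph morphism.

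\textbf{Lifting and estimating.} Apply Lemma \ref{lem_ext_to_aut} to get $g\in\Aut(\limS)$ with $\pi_m\circ g=f\circ\pi_N$. For $\mbf{x}\in\limS$ with $\pi_N(\mbf{x})=b$, the point $q(g(\mbf{x}))$ lies in $q[\pi_m^{-1}(f(b))]$. Also $h(q(\mbf{x}))$ lies in some $q[\pi_m^{-1}(a)]$ with $d_{A_m}(a,f(b))\leq 1$. Hence $d_S(\Phi(g)(q(\mbf{x})),h(q(\mbf{x})))<\varepsilon$ by the choice of $m$, after adjusting constants (using distance $\leq 2$, with $\varepsilon$ chosen smaller at the start). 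This gives the density of $\Phi[\Aut(\limS)]$ in $\Homeo(S)$.
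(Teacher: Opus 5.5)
Your skeleton (build a level map $f\colon A_N\to A_m$ from $h$, lift it by Lemma~\ref{lem_ext_to_aut}, compare $\Phi(g)$ with $h$) is a direct version of what the paper obtains by citing Irwin's Theorem~4.22, and your closing estimate is fine. But the steps you defer carry all the content, and as written they are not established.

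First, your $f$ need not be a graph morphism. If $b\,R^{A_N}b'$ and $h(q[\pi_N^{-1}(b)])$, $h(q[\pi_N^{-1}(b')])$ meet the pieces indexed by $\{a-1,a\}$ and $\{a,a+1\}$ respectively, arbitrary choices may send $b,b'$ to $a-1,a+1$. This is repairable. Choose $N$ so that for every $b$ the $h$-image of the whole star $\bigcup_{b'R^{A_N}b}q[\pi_N^{-1}(b')]$ meets only pieces $q[\pi_m^{-1}(a)]$ with $a$ in a set of pairwise $R^{A_m}$-related vertices. Then any choice works, and your final estimate is unaffected.

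Second, your surjectivity argument only shows that some $b$ has $f(b)\in\{a,a'\}$. It shows neither that both $a$ and $a'$ are attained nor that the edge between them is. Surjectivity is not really a separate issue, though. A graph homomorphism between cycles of nonzero degree is automatically surjective on vertices and edges, since its lift to $\mathbb{Z}$ moves by steps in $\{-1,0,1\}$ across a full period. So everything reduces to showing $\deg(f)\neq 0$.

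That is the real gap, and it is exactly what the paper supplies through ``positive rank''. Irwin's Theorem~4.16 and Lemma~4.17 say that circular chains on $S$ of small mesh are positive, and uniform continuity of $h^{-1}$ transfers this to $h$. Your justification does not work as stated: a single degree-zero level map does not make $S$ chainable. You need a quantitative form:
\begin{enumerate}
\item Since $S$ is not chainable, a Lebesgue number argument gives $\delta>0$ such that $S$ has no chain cover of mesh $<\delta$.
\item Choose $m$ so that every $q[\pi_m^{-1}(a)]$ has diameter $<\delta/2$, and $N$ so that every $h(q[\pi_N^{-1}(b)])$ has diameter $<\delta/8$.
\item If $\deg(f)=0$, the lift $\tilde f$ from the proof of Lemma~\ref{lem_deg_cont} closes up to a map $A_N\to\mathbb{Z}$. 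The closed sets $V_j=\bigcup\{h(q[\pi_N^{-1}(b)]):\tilde f(b)=j\}$ then cover $S$ and have diameter $<\delta$. By Lemma~\ref{lem_rel_disj} they satisfy $V_j\cap V_{j'}=\emptyset$ for $|j-j'|\geq 2$.
\item A slight fattening of the $V_j$ is an open chain of mesh $<\delta$, with consecutive links meeting by connectedness. This is a contradiction.
\end{enumerate}

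Alternatively, you can avoid non-chainability altogether. Approximate $h^{-1}$ the same way by $f'\colon A_{N'}\to A_N$. Then $f\circ f'$ is within graph distance $3$ of $\varphi_m^{N'}$. Run the lifting argument of Lemma~\ref{lem_deg_cont} on a cycle $A_m$ with at least $9$ vertices, and use multiplicativity of degree (as in Lemma~\ref{lem_degree_multiplicative}, valid for arbitrary graph homomorphisms of cycles). This gives $\deg(f)\deg(f')=\deg(\varphi_m^{N'})\neq 0$.

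Until one of these arguments is carried out, you do not know that $f\in\Epi(A_N,A_m)$, so Lemma~\ref{lem_ext_to_aut} cannot yet be applied.
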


\begin{proof}
    An inspection of the proof of Theorem 4.22 of \cite{irwin_thesis} shows that the $h^*$ in the statement of Theorem 4.22 is in $\Phi[\Aut(\limS)]$. So by Theorem 4.22 of \cite{irwin_thesis} it suffices to show that any homeomorphism $f:S \to S$ is of positive rank (see Definition 4.8 of \cite{irwin_thesis} for the definition of positive rank). By \cite{irwin_thesis} Theorem 4.16 and Lemma 4.17, let $\delta >0$ be such that every circular chain on $S$ whose elements all have diameter less than $\delta$ is a positive chain. Let $\mathcal{U}$ be any open cover of $S$. Since $f^{-1}$ is uniformly continuous, let $\varepsilon >0$ so that $d_S(x,y)<\varepsilon \implies d_S(f^{-1}(x),f^{-1}(y))<\delta$. Take $\mathcal{C}$ to be a circular chain refining $\mathcal{U}$ and also having the property that each element of $\mathcal{C}$ has diameter less than $\epsilon$. Then $\{f^{-1}(C)\ : \ C \in \mathcal{C}\}$ is a chain with every member having diameter less than $\delta$ and hence is a positive chain. This shows that $f$ has positive rank.
\end{proof}

The anti-Ramsey result we will use relies on the lemma below which is essentially a combinatorial version of the fact that degree is a continuous invariant on the space of continuous functions from the circle to itself with the uniform convergence topology:

\begin{lem}\label{lem_deg_cont}
    Let $(C,R^C)$ be the cycle with 5 vertices. If $f,g :(D,R^D) \to (C,R^C)$ are morphisms in $\mathcal{S}$ such that $d_{\Epi(D,C)}(f,g) \leq 1$, then $\deg(f)=\deg(g)$.
\end{lem}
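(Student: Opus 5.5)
Fix the labelling $0,1,\dots,4$ of $C$ and $0,1,\dots,n-1$ of $D$. The plan is to read the degree as a winding number of a lift and show that two $1$-close maps have lifts that stay close. Every edge $(i,i+1)$ of $D^{\textrm{path}}$ (indices mod $n$) is sent by $f$ to two $R^C$-related vertices. Assign it the increment $\delta_f(i)=f(i+1)-f(i)$, taken mod $5$ and represented in $\{-1,0,1\}$; this is possible because $f$ is a graph morphism. Then $\delta_f(i)=+1$ exactly when $(i,i+1)\in D^+_f$, $-1$ exactly when it lies in $D^-_f$, and $0$ when it is constant. So $\sum_i \delta_f(i)=\norm{D^+_f}-\norm{D^-_f}$. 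Lifting $f$ to integers by $\tilde f(0)\in\{0,\dots,4\}$ with $\tilde f(0)\equiv f(0)$ and $\tilde f(i+1)=\tilde f(i)+\delta_f(i)$ gives $\tilde f(i)\equiv f(i)\pmod 5$ and $\tilde f(n)-\tilde f(0)=\norm{D^+_f}-\norm{D^-_f}=\pm 5\deg(f)$. Define $\tilde g$ in the same way.

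The key step is to show that $\tilde f(i)-\tilde g(i)$ stays bounded in absolute value by $1$ at every $i\le n$, once $\tilde g(0)$ is chosen suitably. Since $d_{\Epi(D,C)}(f,g)\le1$, at each vertex $f(i)$ and $g(i)$ are equal or adjacent in the $5$-cycle, so $f(i)-g(i)\equiv e\pmod 5$ for some $e\in\{-1,0,1\}$. Choose $\tilde g(0)$ so that $\abs{\tilde f(0)-\tilde g(0)}\le1$. Now induct on $i$. Set $u_i=\tilde f(i)-\tilde g(i)$. Then $u_{i+1}-u_i=\delta_f(i)-\delta_g(i)\in\{-2,\dots,2\}$, so $\abs{u_{i+1}}\le 3$. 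On the other hand $u_{i+1}$ is congruent mod $5$ to some $e\in\{-1,0,1\}$, so $u_{i+1}\in\{-1,0,1\}\cup\{\pm4,\pm5,\pm6\}\cup\cdots$. Intersecting with $\abs{u_{i+1}}\le3$ forces $\abs{u_{i+1}}\le1$. This is exactly where having $5$ vertices is used: $5$ exceeds $1+2+1$, so the congruence pins down the integer difference. With a cycle of $4$ vertices the argument would fail, since antipodal points would be ambiguous.

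Applying this at $i=n$ and $i=0$ gives
\[\abs{(\tilde f(n)-\tilde f(0))-(\tilde g(n)-\tilde g(0))}\le 2.\]
Both quantities are multiples of $5$, so they must be equal. Hence $\norm{D^+_f}-\norm{D^-_f}=\norm{D^+_g}-\norm{D^-_g}$, and taking absolute values and dividing by $5$ gives $\deg(f)=\deg(g)$. Note that the signs must be compared before taking absolute values; the lift argument yields equality of the signed sums, which is stronger than what is needed.

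The main obstacle I expect is bookkeeping rather than substance: checking that the paper's orientation conventions (positively oriented meaning $(k,k+1)$ or $(4,0)$) match the sign of $\delta$, and that the vertex $n$ is identified with $0$ in the cycle so that the closing edge $(n-1,0)$ is included. The nonzero-degree hypothesis on morphisms in $\mathcal{S}$ is not needed for this argument.
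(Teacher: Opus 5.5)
Your proof is correct. Its core is the same as the paper's: you lift $f$ and $g$ to integer-valued walks $\tilde f,\tilde g$ on $\{0,\dots,n\}$. You then show inductively that $|\tilde f(i)-\tilde g(i)|\le 1$, using that $5>1+2+1$, so the residue of the difference modulo $5$ determines the difference itself.

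The difference lies in how the starting point is handled. The paper first assumes $f(d)=g(d)$ for some vertex $d$, so both lifts can start at the same integer and $|\tilde f(n)-\tilde g(n)|\le 1$ directly gives $|\deg(f)-\deg(g)|\le 1/5$. It then reduces the general case to this one. It precomposes with the degree-$1$ ``tripling'' map $\varphi:E\to D$ (invoking Lemma \ref{lem_degree_multiplicative}) and alters $f\circ\varphi$ at one vertex so that it agrees with $g\circ\varphi$ there. This requires checking that the altered map is still a morphism of the same degree.

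You instead allow an initial offset $|u_0|\le 1$ and absorb it at the end: $|u_n-u_0|\le 2<5$, and both net windings are multiples of $5$. This makes the paper's reduction step unnecessary and avoids multiplicativity of degree entirely. It also yields a slightly stronger conclusion: the signed windings $|D_f^+|-|D_f^-|$ and $|D_g^+|-|D_g^-|$ agree, not just their absolute values.

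One cosmetic point: once you take $\tilde g(0)=\tilde f(0)-e$, it may fall outside $\{0,\dots,4\}$, contrary to ``define $\tilde g$ in the same way''. This is harmless, since all you use is $\tilde g(0)\equiv g(0)\pmod 5$.
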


\begin{proof}
    Let $f,g$ be in $\Epi(D,C)$.  We will assume at first that there is $d \in D$ with $f(d)=g(d)$. Then, at the end we address why this assumption is okay to make. We label the vertices of $D$ by $0,1,2,\ldots, n-1$ in the usual way where $iR^Dj \iff \norm{i-j}\leq 1 \textrm{ or }\{i,j\}=\{0,n-1\}$ and so that $d$ corresponds to $0$. Similarly we assume $C$ has vertices labeled by $0,1,2,3,4$. So $f,g$ are maps $\{0,1,2,\ldots, n-1\} \to \{0,1,2,3,4\}$ with $f(0)=g(0)$. 

    We now define maps $\tilde{f}$, $\tilde{g}$ which go from $\{0,1,\ldots,n\}$ to $\Z$. The maps $\tilde{f}$,$\tilde{g}$ can be thought of as ``lifts'' of $f,g$ in the sense of classical algebraic topology of circle maps. Here is the definition which is given inductively on $i$:
    Define $\tilde{f}(0)=f(0)$. 
    Given that $\tilde{f}(i)$ is defined where $i<n-1$, we define: 
    \[\tilde{f}(i+1)= 
    \begin{cases}
        \tilde{f}(i)+1 & \textrm{ if } (i,i+1) \textrm{ is positively oriented for }f\\
        \tilde{f}(i) & \textrm{ if } (i,i+1) \textrm{ is constant for }f\\
        \tilde{f}(i)-1 & \textrm{ if } (i,i+1) \textrm{ is negatively oriented for }f\\
    \end{cases}
    \]
    We define 
    \[\tilde{f}(n)= 
    \begin{cases}
        \tilde{f}(n-1)+1 & \textrm{ if } (n-1,0) \textrm{ is positively oriented for }f\\
        \tilde{f}(n-1) & \textrm{ if } (n-1,0) \textrm{ is constant for }f\\
        \tilde{f}(n-1)-1 & \textrm{ if } (n-1,0) \textrm{ is negatively oriented for }f\\
    \end{cases}
    \]
    
    We let $\tilde{g}:\{0,1,\ldots, n\} \to \Z$ be defined analogously for $g$.

    Notice that $\deg(f)= \frac{\norm{\tilde{f}(n)-\tilde{f}(0)}}{5}$ and similarly $\deg(g)= \frac{\norm{\tilde{g}(n)-\tilde{g}(0)}}{5}$. 
    
    For any $i \in \{0,1,\ldots, n-1\}$, 
    \[\tilde{f}(i) \pmod 5=f(i)\]
    and
    \[\tilde{g}(i) \pmod 5 =g(i)\]
    and 
     \[\tilde{f}(n) \pmod 5 =f(0), \ \tilde{g}(n) \pmod 5 =g(0) \]

    \begin{claim}
        We have that for any $i \in \{0,1,2\ldots, n\}$, $\norm{\tilde{f}(i)-\tilde{g}(i)} \leq 1$. 
    \end{claim}

    \begin{proof}
        We prove this by induction on $i$. First note that $\tilde{f}(0)=f(0)=g(0)=\tilde{g}(0)$. Suppose the claim holds for some $i$. This means that $\norm{\tilde{f}(i)-\tilde{g}(i)} \leq 1$. Since $d_{\Epi(D,C)}(f,g)\leq 1$,  $f(i+1) R^C g(i+1)$, and so we have that 
        
        \begin{equation}\label{eqn_new_1}
        \begin{split}
            \norm{\tilde{f}(i+1) \pmod 5-\tilde{g}(i+1) \pmod 5} \leq 1 \\ \textrm{ or }  \{\tilde{f}(i+1) \pmod 5,\tilde{g}(i+1) \pmod 5\}=\{0,4\}
        \end{split}
        \end{equation}
        
        Now the fact that and $\norm{\tilde{f}(i)-\tilde{g}(i)} \leq 1$, Equation \ref{eqn_new_1}, and $\norm{\tilde{f}(i+1)-\tilde{f}(i)} \leq 1$ (by definition of $\tilde{f}$) and  $\norm{\tilde{g}(i+1)-\tilde{g}(i)} \leq 1$ imply that $\norm{\tilde{f}(i+1)-\tilde{g}(i+1)} \leq 1$.
    \end{proof}

    Since $\norm{\tilde{f}(n)-\tilde{g}(n)} \leq 1$, we have that $\norm{\deg(f)-\deg(g)} \leq \frac{1}{5}$ and so since $\deg$ takes values in $\N$, we have that $\deg(f)=\deg(g)$.

    In general, let $f,g:D \to C$ with $d_{\Epi(D,C)}(f,g)\leq 1$. Then, consider the map $\varphi: (E,R^E) \to D$ where $E$ is a cycle with $3n$ vertices defined as follows. Let $E=\{0,1,2,\ldots, 3n-1\}$ and consider $E$ as divided into intervals of the form $I_j=\{3j,3j+1,3j+2\}$ as $j$ ranges over $0,1,\ldots n-1$. and define 
    \[\varphi(i)= j \iff i \in I_j  \]
    It is not hard to see that $\varphi$ is in $\mathcal{S}$ and that $\deg(\varphi)=1$. Let $f_1=f \circ \varphi$ and $g_1=g \circ \varphi$. Note that $d_{\Epi(E,C)}(f_1,g_1)\leq 1$ and by Lemma \ref{lem_degree_multiplicative}, $\deg(f_1)=\deg(f)$ and $\deg(g_1)=\deg(g)$. Now consider $f_2$ defined by $f_2=f_1$ on $E\setminus \{1\}$ and $f_2(1)=g_1(1)$. Note $f_1(0)=f_1(1)=f_1(2)=k$ for some $k$ and $g_1(1)=j$ where $jR^Ck$ since $d_{\Epi(E,C)}(f_1,g_1) \leq 1$. So $f_2(0)R^Cf_2(1)R^Cf_2(2)$. Since 1 was the only value of the domain changed in $f_2$, this shows $f_2$ is still an epimorphism. Further $\deg(f_2)=\deg(f_1)=\deg(f)$ since the construction of $f_2$ replaced one constant edge of $f_1$ by a positively oriented edge and one constant edge of $f_1$ by a negatively oriented edge. We still have $d_{\Epi(f_2,g_1)} \leq 1$ since $f_2(1)=g_1(1)$ and that was the only change made from $f_1$. So we have $f_2$ and $g_1$ satisfy the assumption we made in the beginning: they are in $\Epi(E,C)$, $d_{\Epi(E,C)}(f_2,g_1)\leq 1$, there is $x \in E$ with $f_2(x)=g_1(x)$. Also, $\deg(f_2)=\deg(f)$ and $\deg(g_1)=\deg(g)$. This finishes the proof of the lemma.
\end{proof}

Now we can prove the main application:

\begin{thm}\label{thm_pseudo_sol}
    Let $S$ be the universal pseudo-solenoid. The group $\Homeo(S)$ of homeomorphisms of $S$ has non-metrizable universal minimal flow.
\end{thm}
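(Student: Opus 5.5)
By Lemma \ref{lem_fraisse_dense_pseudo_sol}, $\Phi[\Aut(\limS)]$ is dense in $\Homeo(S)$, so its closure is all of $\Homeo(S)$. Corollary \ref{cor_main_metr_umf} therefore reduces the theorem to a purely combinatorial statement: some object of $\mathcal{S}$ has infinite approximate Ramsey degree. I will take $C$ to be the cycle with $5$ vertices, since Lemma \ref{lem_deg_cont} applies to it.

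Fix $k \in \N$. I want some $B \in \mathcal{S}$ such that for every $D \in \mathcal{S}$ with $\Epi(D,B)\neq\emptyset$ there is a coloring $\chi:\Epi(D,C)\to[k]$ with the following property: for every $f\in\Epi(D,B)$ and every $I\subseteq[k]$ with $|I|\le k-1$,
\[\Epi(B,C)\circ f\not\subseteq[\chi^{-1}(I)]_1.\]

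The coloring will be by degree. Choose distinct primes $p_1,\ldots,p_k$ and define
\[\chi(h)= i \ \text{ if } p_i \text{ divides } \deg(h) \text{ and } p_j \nmid \deg(h) \text{ for } j<i, \qquad \chi(h)=k \ \text{ otherwise}.\]
A cleaner alternative is to color by $\deg(h)$ modulo a large number, but the prime-based version is what I will use.

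Lemma \ref{lem_deg_cont} does the key work. If $d_{\Epi(D,C)}(h,h')\le 1$, then $\deg(h)=\deg(h')$, so $\chi(h)=\chi(h')$. Hence the $1$-neighborhood $[\chi^{-1}(I)]_1$ equals $\chi^{-1}(I)$, and the approximate condition collapses to the exact one. It therefore suffices to choose $B$ so that for every $f\in\Epi(D,B)$, the set $\Epi(B,C)\circ f$ meets all $k$ color classes. By Lemma \ref{lem_degree_multiplicative}, the degrees appearing in $\Epi(B,C)\circ f$ are exactly $\deg(f)\cdot\deg(g)$ for $g\in\Epi(B,C)$, and $\deg(f)=:e\ge 1$ is some unknown positive integer.

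This is the main obstacle: $e$ may itself be divisible by some of the $p_i$, which would destroy the intended color pattern. So I will not fix $k$ primes. Instead I color by a quantity that survives multiplication by an unknown factor, namely the $2$-adic valuation:
\[\chi(h)=\big(v_2(\deg(h)) \bmod k\big)+1.\]
Multiplying by $e$ shifts $v_2$ by the constant $v_2(e)$, so the residues modulo $k$ are shifted cyclically. If $\{v_2(\deg g): g\in\Epi(B,C)\}$ covers all residues modulo $k$, then so does $\{v_2(e\deg g)\}$, and $\Epi(B,C)\circ f$ meets every color class. It remains to choose $B$ admitting morphisms to $C$ of degrees $1,2,4,\ldots,2^{k-1}$. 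I will take $B$ to be a cycle with $5\cdot 2^{k-1}\cdot M$ vertices, $M$ large. The construction wraps $B$ around $C$ exactly $2^j$ times, inserting constant or back-and-forth segments to absorb extra vertices. A short explicit construction, padding with constant edges, shows such morphisms exist once $|B|\ge 5\cdot 2^{k-1}$; the surjectivity-on-edges requirement holds automatically once the map wraps at least once.

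With this, $\Ard(C)\ge k$ for every $k$, so $\Ard(C)=\infty$. Corollary \ref{cor_main_metr_umf} together with Lemma \ref{lem_fraisse_dense_pseudo_sol} then shows that $\Homeo(S)$ has non-metrizable universal minimal flow.
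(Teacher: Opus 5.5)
Your proof is correct and is essentially the paper's argument. Like the paper, you color $\Epi(E,C)$ for the $5$-cycle $C$ by the $2$-adic valuation of the degree modulo $k$, use Lemma \ref{lem_degree_multiplicative} so that precomposition with a fixed $g$ only shifts that valuation cyclically, take a source cycle admitting morphisms to $C$ of every degree $2^j$ (the paper uses $5\cdot 2^k$ vertices), and invoke Lemma \ref{lem_deg_cont} so the $1$-neighborhood of a color class adds nothing. Your passing remark that coloring by $\deg(h)$ modulo a large number would be a cleaner alternative is wrong for the same reason as the prime-based coloring (if the modulus divides $\deg(f)$, only one color appears), but you do not use it, so the final argument is unaffected.
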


\begin{proof}
    By Corollary \ref{cor_main_metr_umf}, Proposition \ref{prop_dense_subgp}, and Lemma \ref{lem_fraisse_dense_pseudo_sol} it suffices to show that the projective Fraisse category $\mathcal{S}$ has an object of infinite Approximate Ramsey degree. We claim that the cycle $C$ with five vertices has infinite Approximate Ramsey degree.

    Let $k \in \N$. We will show that the cycle $D$ with $5\cdot 2^{k}$ vertices witnesses that  $\rdeg(C) \geq k$. Notice that for $i=1,\ldots,2^k$ there exists a morphism $\phi \in \Epi(D,C)$ with $\deg(\varphi)=i$.

    Let $E$ be any cycle with $\Epi(E,D) \neq \emptyset$. We color $\Epi(E,C)$ as follows. For any $n\in\N$, let $\rho(n)$ be the maximum natural number such that $2^{\rho(n)} \vert n$. That is, $\rho(n)$ is the number of 2's that occur in the prime decomposition of $n$. Color by $\chi:\Epi(E,C) \to \{0,1,2,\ldots, k-1\}$ defined by
    \[\chi(f)=\rho(\deg(f))\pmod k\]
    Let $g \in \Epi(E,D)$. Let $i \in \{0,1,\ldots,k-1\}$. Then, let $\varphi \in \Epi(D,C)$ with $\deg(\varphi) = 2^j$ where $j=i-\rho(\deg(g)) \pmod k$. Then:
    \begin{align*}
         \chi(\varphi \circ g) &= \rho(\deg(\varphi \circ g)) \pmod k\\
         &= \rho(\deg(g) \deg(\varphi)) \pmod k \\
         &=\rho(\deg(g))+\rho(\deg(\varphi)) \pmod k\\
         &=i\\
    \end{align*}
    Further, for any $h \in \Epi(E,C)$ with $d_{\Epi(E,C)}(\varphi \circ g,h) \leq 1$ by Lemma \ref{lem_deg_cont} has $\deg(h)=\deg(\varphi\circ g)$ and thus $\chi(h)=i$. So, 
    \[\Epi(D,C) \circ g \not \subseteq[\chi^{-1}(\{0,1,\ldots,k-1\}\setminus \{i\})]_1\]
    Since this holds for all $i$, we have that for any $I \subset \{0,1,\ldots,k-1\}$ with $\norm{I}\leq k-1$, $\Epi(D,C) \circ g\not \subseteq [\chi^{-1}(I)]_1$ and so $\rdeg(C) \geq k$. 
\end{proof}

\printbibliography

\end{document}